\DeclarePairedDelimiter\floor{\lfloor}{\rfloor}
\numberwithin{equation}{section}
\theoremstyle{definition}
\theoremstyle{definition}
\newtheorem*{ques}{Question}
\newtheorem{remark}{Remark}[section]
\theoremstyle{plain}
\newtheorem{theorem}{Theorem}[section]
\newtheorem{lemma}[theorem]{Lemma}
\newtheorem{cor}[theorem]{Corollary}
\newtheorem{Prop}[theorem]{Proposition}
\newcommand{\beas}{\begin{eqnarray*}}
\newcommand{\eeas}{\end{eqnarray*}}
\newcommand{\bes} {\begin{equation*}}
\newcommand{\ees} {\end{equation*}}
\newcommand{\be} {\begin{equation}}
\newcommand{\ee} {\end{equation}}
\newcommand{\bea} {\begin{eqnarray}}
\newcommand{\eea} {\end{eqnarray}}
\newcommand{\eps}{\varepsilon}
\newcommand{\om}{\omega}
\newcommand\partl[2]{\dfrac{\partial{#1}}{\partial{#2}}}
\newcommand{\cont}{\mathcal{C}}
\newcommand{\hol}{\mathcal{O}}
\newcommand{\op}{\operatorname}
\newcommand{\wt}{\widetilde}
\newcommand{\CC}{\mathbb{C}^2}
\newcommand{\Cn}{\mathbb{C}^n}
\newcommand{\C} {\mathbb{C}} 
\newcommand{\rl}{\mathbb{R}}
\newcommand{\pnat} {\mathbb{N}_+} 
\newcommand{\N} {\mathbb{N}}
\newcommand{\RP}{\mathbb{RP}}
\newcommand{\CP}{\mathbb{CP}}
\newcommand{\std}{_{\operatorname{st}}}
\begin{document}
\title{Rational and Polynomial Density on Compact Real Manifolds}
\author{Purvi Gupta and Rasul Shafikov}
\address{Department of Mathematics, University of Western Ontario, London, Ontario N6A 5B7, Canada}
\email{pgupta45@uwo.ca}
\address{Department of Mathematics, University of Western Ontario, London, Ontario N6A 5B7, Canada}
\email{shafikov@uwo.ca}
\begin{abstract} We establish a characterization for an $m$-manifold $M$ to admit $n$ functions $f_1$,...,$f_n$  and $n'$ functions $g_1,...,g_{n'}$ in $\cont^\infty(M)$ so that every element of $\cont^k(M)$ can be approximated by rational combinations of $f_1,...,f_n$ and polynomial combinations of $g_1,...,g_{n'}$. As an application, we show that the optimal value of $n$ and $n'$ for all manifolds of dimension $m$ is $\floor{\frac{3m}{2}}$, when $k\geq 1$ and $m\geq 2$.
\end{abstract}
\maketitle
\section{Introduction}
Let $M$ be a $\cont^\infty$-smooth compact manifold without boundary. We say that $\cont^k(M)$, the space of $k$-times continuously differentiable $\C$-valued functions on $M$, has {\em $n$-rational density} if there is a tuple $F=(f_1,...,f_n)$ of $n$ functions in $\cont^\infty(M)$ such that the set
\bes
	\{R\circ F :R\text{ is a rational function on $\Cn$ with no poles on}\ F(M)\}
\ees
is dense in $\cont^k(M)$ in the  $\cont^k$-norm on $M$. If $F$ exists, we call $\{f_1,...,f_n\}$ an {\em RD-basis} of $\cont^k(M)$. If $F$ can be chosen so that 
\bes
	\{P\circ F :P\text{ is a holomorphic polynomial on}\ \Cn\}
\ees
is $\cont^k$-dense in $\cont^k(M)$, then we say that $\cont^k(M)$ has {\em $n$-polynomial density} and call $\{f_1,...,f_n\}$ a {\em PD-basis} of $\cont^k(M)$. It is a simple consequence of the Stone-Weierstrass theorem that the space of continuous functions on a circle has $1$-rational density and $2$-polynomial density. This paper is motivated by the following two-dimensional analogue of this fact, the first part of which follows from \cite{FoRo93} and the second is proved in \cite{ShSu15}.

\begin{flushleft}
{\em The space of continuous functions on any smooth compact surface has $3$-polynomial density, and with the exception of the $2$-sphere and the projective plane, has $2$-rational density. }
\end{flushleft}
For topological reasons, the space of continuous functions on any surface cannot have $2$-polynomial density. 
We ask whether similar statements can be made for $\cont^k$-spaces on compact manifolds of higher dimensions. To this end, we first obtain the following characterization.

\begin{theorem}\label{thm_main}
Let $k,m,n\in\N$, and $M$ be a compact $\cont^\infty$-smooth $m$-dimensional manifold. 
\begin{enumerate}
\item For $n>m$, $\cont^k(M)$ has $n$-polynomial density if and only if it has $n$-rational density. These conditions are both equivalent to the embeddability of $M$ as a $\cont^\infty$-smooth totally real submanifold of $\Cn$.  
\item $\cont^k(M)$ has $m$-rational density if and only if $M$ admits a $\cont^\infty$-smooth Lagrangian embedding into $(\mathbb C^m,\om\std)$. 
\end{enumerate}
\end{theorem}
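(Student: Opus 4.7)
The plan is to decouple the two parts using the common notion of a totally real embedding and to argue, in each case: (a) the $n$- or $m$-rational/polynomial density hypothesis forces $F=(f_1,\dots,f_n)$ to be a smooth embedding of $M$ into $\Cn$ with totally real image plus an appropriate convexity property; and (b) conversely, any such embedding delivers density through an established approximation theorem on totally real submanifolds. Since polynomial density trivially implies rational density, only four nontrivial implications remain.

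For Part (1), suppose $\cont^k(M)$ has $n$-rational density via $F$. If $F(x_1)=F(x_2)$ for some distinct $x_1,x_2\in M$, then no $R\circ F$ separates these points, contradicting density. If $dF_x$ has a nontrivial kernel at some $x$, every $d(R\circ F)_x$ annihilates the kernel, contradicting $\cont^k$-density for $k\ge 1$; a limit argument handles $k=0$. A complex tangent of $F(M)$ at some point forces every $R\circ F$ to satisfy a Cauchy--Riemann relation there, again contradicting density. Hence $F$ is a totally real $\cont^\infty$-embedding. Conversely, given a totally real embedding $\iota:M\hookrightarrow\Cn$ with $n>m$, a general-position perturbation (using the strict inequality $n>m=\dim_\rl\iota(M)$ to avoid attached analytic discs) makes $\iota(M)$ polynomially convex; the H\"ormander--Wermer theorem then yields $\cont^k$-density of polynomials.

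For Part (2), the backward direction is analogous: a Lagrangian embedding is automatically totally real, and the Duval--Sibony theorem asserts that a compact Lagrangian submanifold of $(\C^m,\om\std)$ is rationally convex; the analogue of H\"ormander--Wermer in the rationally convex totally real setting then yields $\cont^k$-density of rational functions. For the forward direction, the Part (1) arguments provide a totally real embedding $F:M\hookrightarrow\C^m$, and density forces $F(M)$ to be rationally convex (otherwise a point $p$ in the rational hull supports a continuous functional on $\cont^k(F(M))$ annihilating every $R\circ F$ but not an appropriate bump concentrated near $p$). By the converse direction of Duval--Sibony, such a totally real rationally convex $m$-submanifold is Lagrangian for some K\"ahler form $\om'$ defined on a neighborhood of $F(M)$.

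The central obstacle is to promote this auxiliary K\"ahler form to the standard symplectic form $\om\std$ on all of $\C^m$. The plan is a Moser-type argument: extend $\om'$ globally to $\C^m$ (using contractibility to write $\om'-\om\std=d\alpha$), interpolate linearly between the two forms, and solve the Moser equation to obtain a diffeomorphism $\Phi$ of $\C^m$ satisfying $\Phi^*\om\std=\om'$ near $F(M)$, so that $\Phi\circ F$ is a $\cont^\infty$-smooth Lagrangian embedding into $(\C^m,\om\std)$. Identifying a suitable global extension of $\om'$ for which the Moser isotopy is well-defined and preserves enough structure is expected to be the technical heart of the argument; everything else reduces to assembling known approximation theorems on totally real submanifolds together with the embedding-theoretic arguments of Part (1).
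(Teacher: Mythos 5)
Your route is the same as the paper's: density forces $F=(f_1,\dots,f_n)$ to be an injective immersion without complex tangents (point separation, first-order jets, CR-relations), the subcritical converse comes from perturbing a totally real embedding to a polynomially convex one and then applying holomorphic $\cont^k$-approximation plus Oka--Weil, and in the critical case Duval--Sibony converts rational convexity into isotropy for an auxiliary K\"ahler form, to be traded for $\om\std$ by a Moser argument. But the step you defer --- promoting the auxiliary form to $\om\std$ --- is precisely where the proof is incomplete, and your sketch as stated would fail. Writing $\om'-\om\std=d\alpha$ by contractibility and interpolating linearly gives no control at infinity: on the noncompact $\C^m$ the Moser vector field obtained from such an $\alpha$ need not be complete, so the time-one flow $\Phi$ may not exist. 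Note also that Duval--Sibony produce a \emph{global} K\"ahler form $\om'=dd^c\phi$ on $\C^m$, and the right move is to interpolate the strictly plurisubharmonic potentials (so every intermediate form is K\"ahler, hence nondegenerate). The paper's Lemma~\ref{lem_moser} supplies the missing normalization: $\phi$ is first modified by a cutoff and a convex reparametrization so that it equals $C|z|^2+d$ outside a large ball; then the primitive $\beta_t$ is compactly supported, the Moser field is compactly supported, its flow exists, and $\Phi_1$ pulls back $4C\,\om\std$ to $\om'$, which suffices since Lagrangian for a positive multiple of $\om\std$ is Lagrangian for $\om\std$. Without some such argument the forward direction of Part (2) is not proved.

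Three further points. First, your parenthetical ``a limit argument handles $k=0$'' in the immersion step is wrong: $\cont^0$-density gives no control on derivatives, and the characterization genuinely fails for $k=0$ --- the paper's Proposition~\ref{prop_ex} (and the double-torus example for rational density) exhibit manifolds whose $\cont^0$-spaces have the stated density without admitting totally real embeddings; the theorem is meant for $k\geq 1$. Second, the perturbation making a totally real $\iota(M)$ polynomially convex when $n>m$ is not a routine general-position/transversality argument but the substantive L{\o}w--Wold theorem (built on Forstneri{\v c}--Rosay); your proof should cite it as such. Third, in the critical case the passage from functions holomorphic near $F(M)$ to rational functions in the $\cont^k$-norm requires arbitrarily small rationally convex neighbourhoods of $F(M)$ (the paper's Lemma~\ref{lem_ratnbd}, extracted from Duval--Sibony and Nemirovski), which your appeal to a ``H\"ormander--Wermer analogue'' silently presupposes.
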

The first part of Theorem \ref{thm_main} essentially follows from a result due to L{\o}w and Wold in \cite{WoLo09} that relies on techniques developed by Forstneri{\v c} and Rosay in \cite{FoRo93} and Forstneri{\v c} in \cite{Fo94}. In Section \ref{sec_hprin}, we present a topological approach to the equivalence of $n$-rational density and totally-real embeddability that circumvents the issue of polynomial density.

Theorem \ref{thm_main} allows us to invoke results from the literature to obtain the following generalization of the above-mentioned bounds for surfaces.

\begin{cor}\label{cor_opt} Let $M$ be as in Theorem \ref{thm_main} and $0\leq \ell\leq k$. Then
	\begin{enumerate}
		\item No $\cont^\ell(M)$, $\ell\geq 0$, has $(m-1)$-rational density or $m$-polynomial density.
		\item There is an $n$ with $m\leq n\leq \floor{\frac{3m}{2}}$, such that every  $\cont^\ell(M)$ has $n$-rational density and $n'$-polynomial density, where $n'=\max\{n,m+1\}$. Furthermore, there is an $m$-dimensional $M$ such that no $\cont^\ell(M)$, $\ell\geq 1$, has $\left(\floor{\frac{3m}{2}}-1\right)$-rational density.  
		\end{enumerate} 
\end{cor}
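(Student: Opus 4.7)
The plan is to reduce each assertion in the corollary to an embedding question about $M$, and then apply Theorem~\ref{thm_main} together with classical results on totally real embeddings (Gromov--Haefliger--Forster) and on obstructions to polynomial convexity (Gromov, Duval--Sibony).

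For Part~(1), non-existence of $(m-1)$-rational density is essentially a dimension count. If $(f_1,\dots,f_{m-1})$ were an RD-basis of $\cont^\ell(M)$ with $\ell \geq 1$, then at every $p \in M$ the complex span of $df_1(p),\dots,df_{m-1}(p)$ in $T_p^*M \otimes_{\rl} \C$ would have to exhaust the whole $m$-dimensional complex space, so that rational combinations $R \circ F$ could match arbitrary $C^1$-data and in particular arbitrary $1$-jets; however, $m-1$ generators span a complex subspace of dimension at most $m-1 < m$, a contradiction. The $\ell = 0$ case requires a separate uniform algebra argument: a continuous injection $F: M \to \C^{m-1}$ cannot satisfy $R(F(M)) = \cont^0(F(M))$, since the topological $m$-manifold $F(M)$ sitting in ambient complex dimension $m-1$ forces analytic structure in its rational hull and obstructs approximation. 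The non-existence of $m$-polynomial density is parallel: for $\ell \geq 1$ a PD-basis produces a totally real immersion $G: M \to \C^m$ of top real dimension with $G(M)$ polynomially convex, contradicting the theorem that a compact totally real submanifold of $\C^m$ of real dimension $m$ is never polynomially convex (via Gromov-type filling by $J$-holomorphic disks).

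For Part~(2), the upper bound $\floor{3m/2}$ is furnished by the Gromov--Haefliger--Forster theorem: every smooth compact $m$-manifold admits a totally real embedding into $\C^{\floor{3m/2}}$. Let $n = n(M)$ be the smallest such complex dimension; then $m \leq n \leq \floor{3m/2}$. If $n > m$, Theorem~\ref{thm_main}(1) directly delivers $n$-rational and $n$-polynomial density. If $n = m$, so that $M$ admits a Lagrangian embedding into $(\C^m,\om\std)$, Theorem~\ref{thm_main}(2) yields $m$-rational density, and $(m+1)$-polynomial density follows by composing the Lagrangian embedding with $\C^m \hookrightarrow \C^{m+1}$ (which preserves total reality) and applying Theorem~\ref{thm_main}(1); this explains $n' = \max\{n, m+1\}$. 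For sharpness, one exhibits $M$ with $n(M) = \floor{3m/2}$: standard characteristic-class obstructions on the complexified tangent bundle $TM \otimes_{\rl} \C$ (which must be stably trivial as a complex bundle) show that suitable real projective spaces $\RP^m$ (or appropriate products) require ambient complex dimension exactly $\floor{3m/2}$, and then Theorem~\ref{thm_main}(1) rules out $(\floor{3m/2}-1)$-rational density for such $M$.

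The main anticipated obstacle is the $\ell = 0$ case of Part~(1), which lies outside the direct scope of Theorem~\ref{thm_main} and demands $C^0$-approximation arguments from uniform algebra theory. A secondary technical point is the handling of the borderline dimensions $m \in \{2,3\}$, where $\floor{3m/2} - 1 = m$ and sharpness instead uses Theorem~\ref{thm_main}(2) together with the fact that $S^m$ for $m = 2, 3$ admits no Lagrangian embedding into $(\C^m,\om\std)$ by Gromov's theorem (since $H^1(S^m;\rl) = 0$); one must also verify the characteristic-class computation carefully to identify a specific manifold saturating the $\floor{3m/2}$ bound.
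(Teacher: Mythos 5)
Your argument for Part (1) when $\ell\geq 1$ (the jet-span dimension count) is fine, and is in fact more elementary than what the paper does; but the $\ell=0$ case is a genuine gap, not a technicality you can wave at. The mechanism you name is off: if $R(F(M))=\cont^0(F(M))$ then $F(M)$ \emph{is} rationally convex, so there is no larger ``rational hull'' in which to look for analytic structure (and hulls need not carry analytic structure anyway). The argument that actually works --- and which handles all $\ell\geq 0$ and both halves of (1) uniformly, as in the paper --- is: density forces $F$ to separate points, so $F(M)$ is a compact topological $m$-manifold in $\C^{m-1}$ (resp.\ $\C^m$) on which rational (resp.\ polynomial) functions are uniformly dense, hence $F(M)$ is rationally (resp.\ polynomially) convex; but rationally convex compacta in $\C^{m-1}$ and polynomially convex compacta in $\C^m$ have vanishing \v{C}ech cohomology in degree $m$ (\cite[Cor.~2.3.10 and 2.3.5]{St07}), while a closed $m$-manifold has $\check H^m\neq 0$. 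Note also that you never address $\ell=0$ for the $m$-polynomial-density half at all, and your appeal to disk-filling for totally real (non-Lagrangian) $m$-folds in $\C^m$ is heavier and less standard than this cohomological route.

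In Part (2) there is an actual false step: taking $n$ to be the minimal totally real embedding dimension, you claim that $n=m$ implies $M$ admits a Lagrangian embedding into $(\C^m,\om\std)$. This is wrong --- the Klein bottle embeds totally really into $\CC$ (its Euler characteristic vanishes) but admits no Lagrangian embedding there (Shevchishin \cite{She}, cited in the paper), so by Theorem \ref{thm_main}(2) its $\cont^k$-spaces, $k\geq1$, do \emph{not} have $2$-rational density; with your choice of $n$ the asserted conclusion fails. The paper sidesteps this entirely by taking $n=\floor{\frac{3m}{2}}$, which is $>m$ for $m\geq 2$, so that the Forstneri\v{c}--Rosay totally real embedding \cite{FoRo93} plus Theorem \ref{thm_main}(1) gives both densities at once ($m=1$ being the circle). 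Finally, your sharpness examples are not right as stated: real projective spaces do not in general require ambient dimension $\floor{\frac{3m}{2}}$ (e.g.\ $\RP^3$ and $\RP^7$ are parallelizable and embed totally really into $\C^3$ and $\C^7$). The examples that work, and which the paper uses, are products of copies of $\CP^2$ (with $S^1$ and $\RP^2$ factors to realize all residues of $m$), where the nonvanishing top dual Pontryagin/Chern class obstructs totally real embeddings --- indeed immersions --- into $\C^{\floor{3m/2}-1}$ (\cite[Thm.~2.1]{HoJaLa12}); since $\floor{\frac{3m}{2}}-1>m$ there, Theorem \ref{thm_main}(1) converts this into failure of $\bigl(\floor{\frac{3m}{2}}-1\bigr)$-rational density for $\ell\geq1$. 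That characteristic-class verification is the heart of the sharpness claim and is exactly what your proposal leaves undone; your separate treatment of $m=2,3$ via $S^2$, $S^3$ and Gromov's theorem is correct in spirit.
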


Although the bounds obtained above are optimal in $m$, they can be improved for particular $m$-dimensional manifolds. For instance, any $m$-fold whose complexified tangent bundle is trivializable admits totally real embeddings into $\C^{m+1}$, and can always achieve $(m+1)$-polynomial density. A similar argument (see \cite[Theorem 4.1]{HoJaLa12}) shows that the $\cont^k$-spaces of all orientable manifolds of dimension $4t+2$, $t>0$, have $(6t+2)$-polynomial density, which is an improvement over Corollary \ref{cor_opt}. 

We must also note that the optimality of the upper bound $\floor{\frac{3m}{2}}$ in Corollary \ref{cor_opt} has been stated for $\ell\geq 1$, as it is possible that the optimal bound for $\cont^0$-spaces is lower. This is because we do not have a characterization similar to Theorem \ref{thm_main} for $n$-rational (or polynomial) density of $\cont^0(M)$. For example, if $M$ denotes the double torus, $\cont^0(M)$ has $2$-rational density (by \cite{ShSu15}), but since $\chi(M)$ is nonzero, it does not admit a totally real embedding into $\CC$ (see ~\cite{We69}). For an example in the subcritical case ($n>m$), see Proposition ~\ref{prop_ex}, where we show that if $M$ is the product of two nonorientable surfaces of Euler characteristic $-3$, then $\cont^0(M)$ has $5$-polynomial density, but $M$ does not admit a totally real embedding into $\C^5$. We can say, however, that the optimal bounds for rational density and polynomial density of $\cont^0(M)$ cannot be too far apart due to the following observation.

\begin{cor}\label{cor_basis} Let $M$ be as in Theorem \ref{thm_main} and $0\leq \ell\leq k$. Suppose $\{f_1,...,f_n\}$ is an $RD$-basis of $\cont^\ell(M)$, then there is a $\cont^\infty$-smooth $g:M\rightarrow\C$ such that $\{f_1,...,f_n,g\}$ is a PD-basis of $\cont^\ell(M)$.
\end{cor}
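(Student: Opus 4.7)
The plan is to find $g\in\cont^\infty(M,\C)$ so that $\Phi:=(F,g):M\to\C^{n+1}$, with $F:=(f_1,\dots,f_n)$, has polynomially convex image (and, for $\ell\geq 1$, totally real image). Given such $g$, the $\cont^\ell$-polynomial approximation on smooth totally real polynomially convex compacta that underlies Theorem~\ref{thm_main}(1) (via \cite{FoRo93,Fo94,WoLo09}) yields $\cont^\ell$-density of $\{P\circ\Phi:P\in\C[z_1,\dots,z_{n+1}]\}$ in $\cont^\ell(M)$, so $\{f_1,\dots,f_n,g\}$ is a PD-basis. Two preliminary facts make this reduction clean. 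First, the RD-basis hypothesis forces $F$ to be injective, and for $\ell\geq 1$ a totally real embedding: at a non-immersion point or a complex tangency $p\in M$, every $d(R\circ F)_p$ would satisfy a fixed $\rl$-linear relation not imposed on generic $dh_p$, obstructing $\cont^1$-density. Second, by Corollary~\ref{cor_opt}(1) we have $n\geq m$, hence $n+1>m$; and adding a smooth $g$ preserves total realness, since a complex tangency of $\Phi$ at $p$ would project to one of $F$ at $p$.

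The construction of $g$ rests on the following consequence of the hull-reduction techniques of Forstneri{\v c}--Rosay and L{\o}w--Wold: \emph{if $K\subset\C^n$ is a compact, smooth, totally real, rationally convex submanifold, then there exists $\varphi\in\cont^\infty(K)$ whose graph in $\C^{n+1}$ is polynomially convex}. The heuristic is that rational convexity exhausts $\widehat{K}\setminus K$ by analytic structure attached to algebraic hypersurfaces disjoint from $K$, and a sufficiently generic perturbation of any smooth starting choice $\varphi_0$ (say $\varphi_0\equiv 0$) makes the graph values vary non-trivially along this structure, collapsing the polynomial hull of the graph back onto the graph. Taking $K:=F(M)$ (which is rationally convex since pullback by $F$ turns $\cont^\ell$-density of $R\circ F$ into $\cont^\ell$-density of rational functions on $K$) and then setting $g:=\varphi\circ F$ gives a $\cont^\infty$-smooth map $\Phi=(F,g)$ with polynomially convex image, as required.

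The main obstacle is confining the hull reduction to the single new coordinate: the Forstneri{\v c}--Rosay/L{\o}w--Wold technique in its standard form perturbs all coordinates simultaneously, whereas here $f_1,\dots,f_n$ are fixed. This is feasible precisely because $K$ is \emph{already} rationally convex (given, rather than engineered): all residual obstructions to $\widehat{K}=K$ are captured by algebraic hypersurfaces missing $K$, and these can be removed by a vertical perturbation in the $(n+1)$-st coordinate only. For $\ell=0$, where $F$ need not be totally real, polynomial convexity of $\Phi(M)$ does not by itself imply $\cont^0$-polynomial density; one must supplement it with a Bishop-type antisymmetry/peak-point condition on the graph, which follows from the $\cont^0$-rational density hypothesis by pullback through $\Phi$.
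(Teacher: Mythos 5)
Your overall skeleton is the same as the paper's: reduce the problem to producing one additional smooth function $\psi$ on $K:=F(M)$ whose graph in $\C^{n+1}$ is polynomially convex (and, for $\ell\geq 1$, totally real, which you correctly get for free from total realness of $K$ by the projection argument), and then run the totally-real/polynomially-convex approximation scheme from the proof of Theorem \ref{thm_main}. The problem is that the one step carrying all the content --- the existence of $\varphi\in\cont^\infty(K)$ with polynomially convex graph --- is exactly where you stop proving things. You assert it as a ``consequence of the hull-reduction techniques'' of Forstneri{\v c}--Rosay and L{\o}w--Wold and justify it with the heuristic that a ``sufficiently generic'' vertical perturbation collapses the hull; but those techniques perturb \emph{all} coordinates of a totally real manifold, and you yourself identify that confining the perturbation to the new coordinate is the obstacle --- your claim that rational convexity makes this ``feasible'' is not an argument. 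The paper does not argue this way at all: it invokes the Rossi--Basener theorem (the proof of Theorem 1.2.11 in \cite{St07}), which for a rationally convex compact $K$ directly constructs $\psi$ (essentially from reciprocals of polynomials nonvanishing on $K$, hence holomorphic near $K$ and smooth on a smooth $K$) such that the graph is polynomially convex and, moreover, the closed algebra generated by the polynomials and $\psi$ on $K$ equals $R(K)$. Without citing this (or supplying an actual construction), your key lemma is unsupported, so the proof has a genuine gap precisely at its crux. Note also that for $\ell\geq 1$ the paper additionally needs, and you silently assume, the smoothness of $\varphi$, which again comes from the Rossi--Basener construction rather than from any genericity principle.

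Your treatment of $\ell=0$ is also muddled. The supplement you propose (``a Bishop-type antisymmetry/peak-point condition on the graph, which follows from the $\cont^0$-rational density hypothesis by pullback'') is left unexplained, and it is not needed: if $\Gamma_\psi$ is polynomially convex, then by Oka--Weil every rational function with poles off $\Gamma_\psi$ lies in $P(\Gamma_\psi)$; since rational functions on $\Cn$ with poles off $K$ pull back to such functions and are uniformly dense in $\cont^0(K)\cong\cont^0(\Gamma_\psi)$ by the RD-basis hypothesis, one gets $P(\Gamma_\psi)=\cont^0(\Gamma_\psi)$ directly. The paper's route is the algebra identity $[P(K),\psi]=R(K)=\cont^0(K)$ furnished by Rossi--Basener, which settles $\ell=0$ in one line. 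So: right strategy, but the decisive existence statement is asserted rather than proved (and attributed to machinery that does not obviously yield it), and the continuous case is patched with a vague condition where a short Oka--Weil or algebra argument is what is actually required.
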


It will be clear from our proof of Theorem \ref{thm_main} that if $F=(f_1,...f_n):M\rightarrow(\Cn,\om\std)$ is an isotropic embedding, then $\{f_1,...,f_n\}$ is an RD-basis. The most practical approach of finding a PD-basis is to seek such an isotropic embedding and to append the $g$ granted by 
the above corollary. For instance, the $n$-torus $\mathbb{T}^n:=\{(z_1,...,z_n):|z_1|=\cdots=|z_n|=1\}$ is Lagrangian with respect to $\om\std$. Thus, $\{z_1,...,z_n\}$ is an RD-basis of $\cont^k(M)$, $k\geq 0$. It is easy to check that $\{z_1,...,z_n,\overline{z_1}\cdot...\cdot\overline{z_n}\}$ forms a PD-basis of $\cont^k(M)$.  

The notion of density is intimately related to that of convexity. A subset $X\subset\Cn$ is called {\em rationally convex} if for every $z\in \Cn\setminus X$, there is a principal algebraic hypersurface that passes through $z$ and completely avoids $X$. If, for each $z\in\Cn\setminus X$ there is a holomorphic polynomial $P$ such that 
	\bes
		|P(z)|>\sup\{|P(x)|:x\in X\},
	\ees
then $X$ is called {\em polynomially convex}. When $\{f_1,...,f_n\}$ is a PD(RD)-basis of $\cont^k(M)$, $F=(f_1,...,f_n)$ maps $M$ onto a polynomially (rationally) convex set in $\Cn$ that is totally real when $k\geq 1$. On the other hand, if $F:M\rightarrow\Cn$ maps onto a polynomially convex set, then, due to an Oka-Weil-type result, functions in $\hol(M)$ can be $\cont^k$-approximated by polynomial functions on $M$. If, furthermore, $F$ is {\em totally real}, i.e., $f_*(T_pM)\cap if_*(T_pM)=0$ for all $p\in M$, then $\hol(M)$ is dense in $\cont^k(M)$. Thus, the question of $n$-polynomial density reduces to that of finding totally real polynomially convex embeddings of $M$ into $\Cn$. But, when $n>m$, L{\o}w and Wold \cite{WoLo09} show that seeking totally real embeddings is enough. This matter has been covered extensively in the literature (see \cite{FoRo93}, \cite{Fo86} and \cite{HoJaLa12}, for instance). 

The second part of Theorem \ref{thm_main} is substantially different as the L{\o}w-Wold argument does not apply to this case ($n=m$). Here, our main tool is a result due to Duval and Sibony (see \cite{DuSi95}) which states that any totally real $\cont^\infty$-smooth submanifold $X\subset\Cn$ is rationally convex if and only if it is {\em isotropic} in $\Cn$ for some K{\"a}hler form $\omega$ --- i.e., $j^*\omega= 0$ on $X$, where $j:X\rightarrow\Cn$ is the inclusion map. 

The rest of this paper is organized as follows. In the next section, we collect some technical lemmas. We present the proof of Theorem \ref{thm_main} in Section \ref{sec_pfthm2}. The corollaries of Theorem \ref{thm_main} are proved in Section \ref{sec_cor}. In Section \ref{sec_hprin}, we discuss an $h$-principle that directly relates rationaly density to totally-real embeddability without any reference to polynomial convexity. In the final section, we summarize the extent of what is known for manifolds of low dimensions, and pose some open questions. 

\smallskip
{\bf Acknowledgments.} We would like to thank Stefan Nemirovski and Alexandre Sukhov for valuable discussions. We are also grateful to Franc Forstneri{\v c} who brought relevant results to our attention that helped us improve an earlier draft of this paper. The second author is 
partially supported by the Natural Sciences and Engineering Research Council of Canada.

\section{Preliminaries}\label{sec_prelim}
We will use the following notation in this paper:

\begin{itemize}
	\item $\floor{x}$ denotes the floor function of $x\in\rl$.
	\item $\hol(X)$ is the space of functions that are holomorphic in some open neighbourhood of a compact $X\subset\Cn$. 
	\item $B(z;r)$ denotes the Euclidean ball of radius $r>0$ centred at $z\in\Cn$. 
	\item $\om\std=dx_1\wedge dy_1+\cdots+dx_n\wedge dy_n$, at any $(z_1,\dots, z_n) = (x_1+iy_1,...,x_n+iy_n)\in\Cn$. 
	\item $R(X)$ is the closed subalgebra of $\cont^0(X)$ that consists of uniform limits of rational functions with no poles on $X$, restricted to $X\subset\Cn$.
	\item $P(X)$ is the closed subalgebra of $\cont^0(X)$ that consists of uniform limits of polynomial functions restricted to $X\subset\Cn$.
\end{itemize}

When talking about $\cont^k$-smooth functions on a $\cont^\infty$-smooth manifold $M$, we keep the following in mind. 

\begin{remark}\label{rmk_norm}
 Let $\mathcal{U}:=\{U_\beta,\phi_\beta\}_{\beta\in B}$ be a finite atlas on $M$. The $\cont^k$-norm on $M$ (with respect to $\mathcal{U}$) is defined as 
	\bes
		||f||_{\cont^k(M),\: \mathcal{U}}=\sum_{\beta\in B}||f\circ\phi_\beta^{-1}||_{\cont^k(\phi_\beta(U_\beta))}.
	\ees
Different choices of $\mathcal{U}$ lead to different, but equivalent, norms on $\cont^k(M)$. The definition of $n$-rational density is, therefore, independent of the choice of $\mathcal{U}$. It follows that if $F:M\rightarrow M'$ is a $\cont^k$-smooth diffeomorphism, then the $\cont^k$-convergence of a sequence $\{h_j\}_{j\in\N}\subset \cont^k(M')$ to some $h\in\cont^k(M')$ is equivalent to the $\cont^k$-convergence of $\{h_j\circ F\}_{j\in\N}$ to $h\circ F$.
\end{remark}

We now present a fact that is implicitly proved in \cite{DuSi95}.

\begin{lemma}\label{lem_ratnbd} Let $S$ be a $\cont^k$-smooth, $k\geq 1$, totally real compact submanifold of $\Cn$ that is rationally convex. Then, for any open neighbourhood $U$ of $S$, there is an open neighbourhood $V\Subset U$ of $S$ such that $\overline V$ is rationally convex.
\end{lemma}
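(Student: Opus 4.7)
The plan is to extract a shrinking basis of rationally convex compact neighborhoods of $S$ from the constructions of \cite{DuSi95}. The strategy has two stages: build a non-negative plurisubharmonic potential $\rho$ whose zero set is exactly $S$, so that its sublevel sets serve as candidate neighborhoods, and then use a global rational approximation of $\rho$ to show that each candidate closure is rationally convex.

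First, since $S$ is totally real, $\cont^k$-smooth with $k \geq 1$, and rationally convex, the Duval--Sibony theorem supplies a K\"ahler form $\omega$ on some neighborhood of $S$ with $j^*\omega = 0$. Passing to a Stein tubular neighborhood $W \Subset U$ of $S$ (available because $S$ is totally real), I would write $\omega = dd^c\tilde\phi$ on $W$, and then correct $\tilde\phi$ by a pluriharmonic function to obtain a smooth strictly plurisubharmonic $\rho\colon W \to [0,\infty)$ with $\rho^{-1}(0) = S$; strict plurisubharmonicity is preserved on a possibly smaller neighborhood. Then
\bes
V_\eps := \{z \in W : \rho(z) < \eps\}
\ees
gives a shrinking basis of open neighborhoods of $S$ with $\overline{V_\eps} \Subset U$ for all sufficiently small $\eps > 0$.

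The crux is to show that $\overline{V_\eps}$ is rationally convex. For this I invoke the approximation at the core of \cite{DuSi95}: for small $\eps$ and arbitrary $\eta > 0$, there exist $N \in \pnat$, $C \in \rl$, a closed ball $\overline{B(0,\mathcal R)} \supset \overline{V_{2\eps}}$, and a rational function $R = P/Q$ on $\Cn$ such that $Q$ has no zeros on $\overline{V_{2\eps}}$, the estimate
\bes
\bigl|\log|R(z)| - (N\rho(z) - C)\bigr| < \eta
\ees
holds on $\overline{V_{2\eps}}$, and $|R(z)| > e^{-C + N\eps}$ on $\overline{B(0,\mathcal R)} \setminus V_{2\eps}$. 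Setting $M := e^{-C + N\eps/2}$ and
\bes
K := \overline{B(0,\mathcal R)} \cap \{z \in \Cn : |P(z)| \leq M\,|Q(z)|\},
\ees
the construction yields $S \subset \op{int} K \subset K \subset \overline{V_{2\eps}}$. Rational convexity of $K$ then follows by a short separation argument: for any $z_0 \in \overline{B(0,\mathcal R)} \setminus K$, either $Q(z_0) = 0$ (so $Q$ itself is a polynomial through $z_0$ not vanishing on $K$), or $|P(z_0)/Q(z_0)| > M$, in which case the polynomial $P - \lambda Q$ with $\lambda := P(z_0)/Q(z_0)$ vanishes at $z_0$ and obeys $|P - \lambda Q| \geq (|\lambda| - M)|Q| > 0$ on $K$; for $z_0 \notin \overline{B(0,\mathcal R)}$, polynomial convexity of the ball supplies the separating polynomial. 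Taking $V$ to be the interior of $K$ (shrunk further so that $\overline V \subset V_\eps$) completes the argument.

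The main obstacle is the approximation step itself. In \cite{DuSi95}, rational convexity of $S$ is obtained pointwise: for each $z_0 \notin S$ a separate rational function is produced whose polar and vanishing loci suffice to exclude $z_0$ from the rational hull. The uniform version required here --- a single $R$ that simultaneously tracks $\rho$ on $\overline{V_{2\eps}}$ and stays above threshold on $\overline{B(0,\mathcal R)} \setminus V_{2\eps}$ --- is the ``implicit'' content of the lemma, and would require revisiting the compactness and gluing portions of \cite{DuSi95} to extract a single rational function with the required global behavior.
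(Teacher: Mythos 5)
There is a genuine gap, and you have in fact flagged it yourself: the entire argument funnels through the claimed existence of a single rational function $R=P/Q$ with $\log|R|$ uniformly close to $N\rho-C$ on $\overline{V_{2\eps}}$ and $|R|$ bounded below off $V_{2\eps}$, and you do not prove this; you only say it ``would require revisiting the compactness and gluing portions'' of \cite{DuSi95}. But that uniform statement is essentially equivalent in strength to the lemma itself --- what \cite{DuSi95} actually provides is pointwise information (for each $z_0\notin S$, a form positive at $z_0$ and vanishing near $S$, or a hypersurface through $z_0$ missing $S$), and converting that into a global object controlling a full neighbourhood is exactly the content that needs an argument. So the proposal reduces the lemma to an unproven assertion; the final separation argument for $K$ and the choice of sublevel sets are fine but carry no weight until that assertion is established. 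A secondary weak point is your construction of the potential: obtaining a non-negative strictly plurisubharmonic $\rho$ with $\rho^{-1}(0)=S$ by ``correcting $\tilde\phi$ by a pluriharmonic function'' is not justified --- isotropy only gives that $j^*(d^c\tilde\phi)$ is closed on $S$, and arranging vanishing of the $1$-jet of the potential along $S$ by a pluriharmonic correction is precisely the delicate step in \cite{DuSi95}, not a routine adjustment.

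The paper closes the gap differently, and the difference is instructive. It takes $\rho$ directly from Stout (\cite[Theorem~6.1.6]{St07}): for a totally real $S$ there is a non-negative strictly plurisubharmonic function vanishing exactly on $S$, with no need to pass through the K\"ahler form from the Duval--Sibony characterization. Then, instead of a global rational approximation of $\rho$, it uses the remark after \cite[Theorem~2.1]{DuSi95} to produce finitely many positive closed $(1,1)$-forms $\omega_B,\omega_{z_1},\dots,\omega_{z_k}$ that vanish near $S$, are positive at prescribed points covering $\overline{B}\setminus V$, and have potential $C'|z|^2$ at infinity; patching these with $dd^c(\chi\rho)$ gives a K\"ahler form $\tilde\omega$ on $\Cn$ agreeing with $dd^c\rho$ near $S$. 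The decisive ingredient is then Nemirovski's Proposition~1 from \cite{Ne08}: if the Levi form of a strictly plurisubharmonic function extends to a positive closed $(1,1)$-form on all of $\Cn$ and the sublevel set is compact, that sublevel set is rationally convex. This is exactly the bridge from pointwise data to rational convexity of a whole neighbourhood that your approach is missing; if you want to salvage your route, you should either quote and verify such a uniform approximation theorem explicitly, or replace it by the Nemirovski-type argument as the paper does.
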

\begin{proof}  Let $S$ and $U$ be as given. Since $S$ is totally real, we can rely on Theorem 6.1.6 in \cite{St07}, to obtain (after shrinking $U$, if need be) a non-negative strictly plurisubharmonic function $\rho:U\rightarrow \rl$ such that $\rho\in\cont^\infty(U\setminus S)\cap\cont^{k+1}(U)$ and $\rho^{-1}(\{0\})\cap U=S$. Next, we fix a neighbourhood $V\Subset U$ of $S$, and choose $\chi$ to be a smooth cut-off function such that
	\beas 
    	&&\chi\big|_V\equiv 1;\\
        &&\chi\big|_{\Cn\setminus U}\equiv 0.
     \eeas
We also fix a closed origin-centred ball $B\subset\Cn$ containing $U$. Our goal is to invoke the following consequence of Theorem 2.1 in \cite{DuSi95}: {\em Let $S$ be a rationally convex compact set in $\Cn$. For every $x\notin S$, there is a smooth positive closed $(1,1)$-form that is strictly positive at $x$ and vanishes in a neighbourhood of $S$.}

In a remark following the above theorem, Duval and Sibony show that, in fact, one can construct a smooth closed $(1,1)$-form which is strictly positive on $\Cn\setminus S$, vanishes on $S$ and has a global potential that is $C'|z|^2$, for some $C'>0$, outside a ball containing $S$. In view of this, we let $\omega_B$ be a closed $(1,1)$-form that vanishes on $B$, is strictly positive outside $B$ and has a global potential that is $C'|z|^2$ outside a larger ball $B'$. For any $z\in B\setminus V $, we can find a smooth positive closed $(1,1)$-form $\omega_z$ that vanishes in some neighbourhood $W_z$ of $S$, is positive in some neighbourhood $N_z$ of $z$ and has a global potential that is $C'|z|^2$ outside $B'$. We cover the compact set $B\setminus V$ by finitely many such neighbourhoods $N_{z_1},...,N_{z_k}$, and set 
	\bes 
    	W:=W_{z_1}\cap\cdots\cap W_{z_k}\cap V. 
    \ees
Note that $W$ is a neighbourhood of $S$ contained in $V$. Now, for sufficiently large constants $c_j>0$, 
	\bes
    	\tilde\omega:=dd^c(\chi\rho)+\omega_B+\sum_{j=1}^kc_j\omega_{z_j},
    \ees
is a K{\"a}hler form on $\Cn$ with $\tilde\omega\big|_W=dd^c\rho$, and potential $C|z|^2$ outside $B'$, for some $C>0$. 

Now, we can employ the following result of Nemirovski (see \cite[Prop. 1]{Ne08}): {\em  Let $\phi$ be a strictly plurisubharmonic function on an open subset $U\subset\Cn$
 such that its Levi form $dd^c\phi$ extends to a positive $d$-closed $(1,1)$-form on $\Cn$. If
the set $K_\phi = \{z \in U | \phi(z)\leq  0\}$ is compact, then it is rationally convex.} We can choose $c>0$ small enough so that $S_c:=\{z\in U:\rho(z)<c\}\Subset W$. Then, $\tilde\omega$ is 
the required extension of $dd^c(\rho-c)$, and $S_c$ is the required neighbourhood of $S$. 
\end{proof}

The next lemma is an application of Moser's trick that allows us to change the underlying K{\"a}hler form.

\begin{lemma}\label{lem_moser} Let $\om=dd^c\phi$ for some $\cont^\infty$-smooth strictly plurisubharmonic function $\phi$ on $\Cn$. If a manifold $M$ admits a $\cont^\infty$-smooth isotropic embedding into $(\Cn,\om)$, then it admits a $\cont^\infty$-smooth isotropic embedding into $(\Cn,\om\std)$.
\end{lemma}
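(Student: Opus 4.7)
The plan is to apply Moser's trick to a linear interpolation between $\om$ and $\om\std$. Fix a strictly plurisubharmonic potential $\phi\std$ for $\om\std$ on $\Cn$ (for instance, a constant multiple of $|z|^2$), and set
\[
\phi_t := (1-t)\phi + t\phi\std, \qquad \om_t := dd^c\phi_t = (1-t)\om + t\om\std, \qquad t\in[0,1].
\]
Since $\phi_t$ is a convex combination of strictly plurisubharmonic functions, each $\om_t$ is K{\"a}hler, and in particular a symplectic form, on $\Cn$, with $\om_0 = \om$ and $\om_1 = \om\std$.

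Writing $\dot\om_t = \om\std - \om = d\beta$ for the globally defined smooth $1$-form $\beta := d^c(\phi\std - \phi)$, I would then invoke nondegeneracy of $\om_t$ to define the unique smooth time-dependent vector field $X_t$ on $\Cn$ by $\iota_{X_t}\om_t = -\beta$. Cartan's formula combined with $d\om_t = 0$ yields the Moser identity
\[
\tfrac{d}{dt}\big(\psi_t^*\om_t\big) = \psi_t^*\big(d\iota_{X_t}\om_t + d\beta\big) = 0,
\]
so the flow $\psi_t$ of $X_t$ satisfies $\psi_t^*\om_t = \om_0 = \om$ wherever it is defined. Setting $t=1$ gives $\psi_1^*\om\std = \om$, so if $j:M\to(\Cn,\om)$ denotes the given isotropic embedding, the composition $\psi_1\circ j$ satisfies
\[
(\psi_1\circ j)^*\om\std = j^*\psi_1^*\om\std = j^*\om = 0,
\]
and is therefore an $\om\std$-isotropic smooth embedding of $M$ into $\Cn$.

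The main technical hurdle I expect is the noncompactness of $\Cn$: the vector field $X_t$ need not be complete, so its flow may fail to reach $t=1$ on all of $\Cn$. However, since $j(M)$ is compact, only a neighborhood of $j(M)$ matters, and a standard cutoff argument handles this: multiply $X_t$ by a smooth compactly supported function $\chi$ identically equal to $1$ on a sufficiently large ball containing $j(M)$. The modified vector field $\chi X_t$ is bounded and hence complete, and since its flow agrees with that of $X_t$ along any trajectory that stays inside $\{\chi \equiv 1\}$, the Moser identity continues to hold on a neighborhood of $j(M)$; taking the ball large enough absorbs the uniform displacement bound on the modified flow over $[0,1]$, producing the required diffeomorphism $\psi_1$.
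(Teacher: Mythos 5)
Your overall strategy is the same as the paper's: interpolate the potentials linearly, take the Moser vector field $X_t$ defined by $\iota_{X_t}\om_t=-\beta$, and push the given isotropic embedding forward by the time-one flow. The gap is in the last step, where you handle noncompactness. Cutting off the \emph{vector field} by $\chi$ and asserting that ``taking the ball large enough absorbs the uniform displacement bound'' is circular: the displacement of the flow of $\chi X_t$ over $[0,1]$ is controlled by $\sup_{\{\chi\neq 0\}}|X_t|$, and this supremum grows (possibly much faster than linearly) as you enlarge the ball, since $\phi$ is an arbitrary smooth strictly plurisubharmonic function with no growth control at infinity; $\beta=d^c(\phi\std-\phi)$ and the inverse of $\om_t$ are both uncontrolled for large $|z|$. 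What you actually need is that the trajectories of the \emph{true} field $X_t$ starting on (a neighbourhood of) $j(M)$ remain in a fixed compact set up to time $1$ --- otherwise, once a trajectory leaves $\{\chi\equiv 1\}$, the modified flow no longer satisfies $\psi_t^*\om_t=\om$, and if trajectories of $X_t$ escape to infinity before time $1$ (which superlinear growth of $X_t$ permits), no choice of ball repairs this. That confinement statement is exactly what is missing, and it is not automatic.

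The paper closes this hole by modifying the \emph{potential} rather than the vector field: using a cutoff $\eta$ and a convex nondecreasing $\theta$, it replaces $\phi$ by $\eta\phi+\theta(C|z|^2)$, which is still strictly plurisubharmonic, agrees with the original $\phi$ on a ball containing $M$ (so $M$ stays isotropic for $dd^c\phi$), and equals $C|z|^2+d$ outside a larger ball. After this normalization the primitive $\beta_t=d^c\bigl(t\,C|z|^2-t\phi\bigr)$ vanishes outside a compact set, hence $X_t$ is compactly supported, its flow is globally defined for all $t\in[0,1]$, and the Moser identity holds everywhere; one then concludes with $\Phi_1^*(4C\om\std)=\om$, a harmless rescaling for isotropy. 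If you first perform such a normalization of $\phi$ at infinity (or otherwise prove a genuine a priori bound keeping the relevant trajectories in a fixed compact set), your argument becomes correct; as written, the completeness/locality step is not justified.
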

\begin{proof} We abuse notation and let $M$ be an isotropic submanifold of $(\Cn,\om)$. We may further assume that for some $C>0$, $\phi=C|z|^2+d$ outside a ball containing $M$. To see this, consider positive integers $r_1<r_2<r_3<r_4$ so that $M\subset B(0;r_1)$. Let $\eta:\Cn\rightarrow\rl$ be a $\cont^\infty$-smooth cutoff function such that
	\beas	
		\eta(z)=\begin{cases}
			1, &\ \text{when}\ |z|\leq r_3;\\
			0, & \ \text{when}\ |z|\geq r_4.
		\end{cases}
		\eeas
Next, we choose $C,d>0$ so that 
	\begin{itemize}
\item $\eta(z)\phi(z)+C|z|^2$ is strictly plurisubharmonic on $\Cn$; and
\item there is a nondecreasing, convex $\cont^\infty$-function $\theta:\rl\rightarrow\rl$ such that
	\beas 
		\theta(x)=
			\begin{cases}
			0, &\ \text{when}\ |x|\leq r_1;\\
			x+d, &\ \text{when}\ |x|\geq r_2.
			\end{cases}
	\eeas 
\end{itemize}
Then, relabelling $\phi$ to be the strictly plurisubharmonic function 
	\bes
		z\mapsto\eta(z)\phi(z)+\theta(C|z|^2),
	\ees
we note that $M$ is isotropic with respect to $\om=dd^c\phi$, since the above modification leaves $\phi$ unchanged in $B(0;r_1)$. 

Now, let $\phi_t=tC|z|^2+(1-t)\phi$. Then, $\om_t:=dd^c\phi_t=dd^c\phi+d\beta_t$, where $\beta_t=d^c(t\phi_1-t\phi)$. Note that $\beta_t\equiv 0$ outside $B(0;r_4)$. Consider the smooth (in $z$) compactly supported vector field  $X_t$ defined by 
	\bes
		\iota_{X_t}\om_t+\partl{}{t}\beta_t=0,
	\ees
where $\iota_X\om$ denotes the contraction of $\om$ with $X$. $X_t$ is unique since each $\om_t$ is nondegenerate (each $\phi_t$ is strictly plurisubharmonic). As $X_t$ is compactly supported, its flow $\Phi_t$ exists. Moreover,
	\beas
		\partl{}{t} \Phi_t^*\om_t&=&\Phi_t^*\left(\partl{}{t}\om_t+L_{X_t}\om_t\right)\\		
		&=&
		\Phi_t^*\left(\partl{}{t}d\beta_t+\iota_{X_t}d\om_t+d\iota_{X_t}\om_t\right)\\
		&=&
		\Phi_t^*d\left(\partl{}{t}\beta_t+\iota_{X_t}\om_t\right)=0.
	\eeas
So, $\Phi_1:\Cn\rightarrow\Cn$ is a $\cont^{\infty}$-diffeomorphism such that $\Phi_1^*(4C\om\std)=dd^c\phi=\om$. Thus, $\Phi_1(M)$ gives the required isotropic embedding of $M$ into $(\Cn,\om\std)$. 
\end{proof}

\section{Proof of Theorem \ref{thm_main}}\label{sec_pfthm2}
Let us consider the case when $n>m$. We will first show that the embeddability of $M$ as a $\cont^\infty$-smooth totally real submanifold of $M$ implies that $\cont^k(M)$ has $n$-polynomial density. As $n$-polynomial density trivially implies $n$-rational density, it will then suffice to show that if $\cont^k(M)$ has $n$-rational density, $M$ admits a totally real $\cont^\infty$-embedding into $\Cn$. 

Suppose $M$ admits a $\cont^\infty$-embedding 
$F:M\rightarrow\Cn$ so that $F(M)$ is totally real. Due to Theorems $1$ and $2$ in \cite{WoLo09}, there is a $\cont^\infty$-smooth map $G:M\rightarrow\Cn$ ($\cont^1$-close to $F$) such that $G(M)$ is totally real and polynomially convex in $\Cn$. Set $M':=G(M)$.

Next, we fix an $f\in\cont^k(M')$ and choose an arbitrary $\eps>0$. Let $\wt\eps=C\eps$, where $C$ is a constant to be determined later. Since $M'$ is totally real, a result due to Range and Siu (see Theorem 1 in \cite{RaSi74}) grants the existence of a neighbourhood $U$ of $M'$ and a $g\in\hol(U)$ such that
	\be\label{eq_c-h}
		||f-g||_{\cont^k(M')}<\wt\eps,
	\ee
Due to the polynomial convexity of $M'$, we can find a neighbourhood $V\Subset U$ of $M'$, such that $\overline V$ is polynomially convex. By the Oka-Weil approximation theorem for polynomially convex sets, there is a polynomial function $P$ on $\Cn$ such that  
	\be\label{eq_h-r_sup}
		||g-P||_{\cont^0(\overline V)}< \wt\eps.
	\ee
Now, fix an $x\in M'$. Since $M'$ is compact, there is an $r>0$ --- independent of $x\in M'$ --- such that $B(x;r)\subset V$. As $g-P\in\hol(B(x,r))$, we can combine Cauchy estimates and \eqref{eq_h-r_sup} to obtain
	\bea 
		\left|g^{(j)}(x)-P^{(j)}(x)\right|
			&\leq&\frac{j!}{r^j}\sup_{y\in B(x;r)}|g(y)-P(y)| \notag \\
			&\leq &\frac{j!}{r^j}||g-P||_{\cont^0(\overline V)}
			\leq \frac{j!}{r^j}\wt\eps,		\label{eq_h-r_ck}								
	\eea
for any $j\in\pnat$. So, we obtain from \eqref{eq_c-h} and \eqref{eq_h-r_ck} that
	\beas
		||f-P||_{\cont^k(M')}&\leq &||f-g||_{\cont^k(M')}+||g-P||_{\cont^k(M')}\\
		&=&||f-g||_{\cont^k(M')}+\sum_{j=0}^k ||g^{(j)}-P^{(j)}||_{\cont^0(M')}\\
		&<&\wt \eps\left(1+\sum_{j=0}^k \frac{j!}{r^j}\right)
		=C\eps\left(1+\sum_{j=0}^k \frac{j!}{r^j}\right).
	\eeas
Now, setting $C=\left(1+\sum_{j=0}^k \frac{j!}{r^j}\right)^{-1}$, we obtain that 	
\bes 
	||f-P||_{\cont^k(M')}<\eps.
\ees
Since $\eps>0$ and $f\in\cont^k(M')$ were chosen arbitrarily, and $C$ is independent of $\eps$, we conclude that rational functions are dense in the space of $\cont^k$-smooth functions on $M'$ in the $\cont^k$-norm. In view of Remark ~\ref{rmk_norm}, the $n$ components of $G$ form a PD-basis (and RD-basis) of $\cont^k(M)$.

We now complete the proof of the subcritical case of Theorem \ref{thm_main}. Let $M$ be a $\cont^\infty$-smooth manifold such that $\cont^k(M)$ has $n$-rational density, and $\{f_1,...,f_n\}$ is an RD-basis of $\cont^k(M)$. We claim that $F:M\rightarrow\Cn$ given by $F=(f_1,...,f_n)$ is a $\cont^\infty$-smooth totally real embedding of $M$ into $\Cn$. 

Suppose $F$ is not injective --- i.e., there are two distinct points $p_1, p_2\in M$ such that $f_j(p_1)=f_j(p_2)$ for all $j=1,...,n$. Then, for any function $G:F(M)\rightarrow\C$, $(G\circ F)(p_1)=(G\circ F)(p_2)$. Now, by the $n$-rational density of $\cont^k(M)$, any $g$ in $\cont^k(M)$ can be uniformly approximated on $M$ by functions of the form $R\circ F$, where $R$ is a rational function on $\Cn$ with no poles on $F(M)$. Hence, $g(p_1)=g(p_2)$ for any $g\in\cont^k(M)$. But this is a contradiction as $\cont^k(M)$ separates points.  

Next, we show that $F$ is a local embedding at every point in $M$. For this, we fix a $p\in M$, and choose a coordinate chart $\phi:U\rightarrow\phi(U)\subset\rl^m$ around $p$, with $\phi(p)=0$. If $\phi=(x_1,...,x_m)$, we write, in local coordinates,
	\bes
		F_{x_j}(x):=\left(\partl{f_1}{x_j}(x),...,\partl{f_n}{x_j}(x)\right),\qquad j=1,...,m.
	\ees
Suppose $\op{rank}(dF(0))<m$. Then, up to relabelling, there exist $\C$-valued constants $\alpha_1,...,\alpha_{m-1}$ such that
	\bes
		\alpha_1F_{x_1}(0)+\cdots+\alpha_{m-1}F_{x_{m-1}}(0)=F_{x_m}(0).
	\ees
Therefore, if $R$ is a holomorphic map in some neighbourhood of $F(U)$, then
	\beas
		\sum_{j=1}^{m-1}\alpha_j\partl{(R\circ F)}{x_j}(0)
		&=&\sum_{j=1}^{m-1}\alpha_j\left(\sum_{k=1}^{n}\partl{R}{z_k}(F(0))\partl{f_k}{x_j}(0)\right)\\
		&=&\sum_{k=1}^{n}\partl{R}{z_k}(F(0))\left(\sum_{j=1}^{m-1}\alpha_j\partl{f_k}{x_j}(0)\right)\\
		&=&\sum_{k=1}^{n}\partl{R}{z_k}(F(0))\partl{f_k}{x_m}(0)\\
		&=&\partl{(R\circ F)}{x_m}(0).
	\eeas
It follows that every function $g$ in the set (and therefore, in the $\cont^k$-closure of the set) 
	\bes
		R_F(M):=\{R\circ F:R\ \text{is a rational function on $\Cn$ with no poles on}\ F(M)\}
	\ees
has the property that 
	\bes
	\partl{g}{x_m}(0)=\sum_{1}^{m-1}\alpha_j\partl{g}{x_j}(0).
	\ees 
But, by our assumption, the $\cont^k$-closure of $R_F(M)$ is $\cont^k(M)$, and one can easily construct a $\cont^k$-smooth function $g$ on $M$ such that $\partl{g}{x_j}(0)=0$ for $j=1,...,m-1$, but $\partl{g}{x_m}(0)\neq 0$. Thus, $\op{rank}(dF(p))=m$ for every $p\in M$, and $F:M\rightarrow\Cn$ is a $\cont^k$-smooth embedding.

It now remains to show that $F(M)$ is a totally real submanifold of $\Cn$. This is done using the same technique as in the preceding paragraphs. Suppose $p\in M$ is such that $T_pF(M)\cap iT_pF(M)\neq 0$. Then, we can choose local coordinates $x_1,...,x_m$ around $p$ such that 
	\bes
		i\partl{F}{x_1}(p)=\partl{F}{x_2}(p).
	\ees
This property is inherited by any function of the form $R\circ F:M\rightarrow\C$, where $R\in\hol(F(M))$, and, therefore, by any function that can be expressed as a $\cont^1$-limit of functions of this form. Hence, by our hypothesis on $M$, any $\cont^k$-smooth function $g$ on $M$ satisfies
		\bes
		i\partl{g}{x_1}(p)=\partl{g}{x_2}(p).
	\ees 
This is a contradiction as one can construct $\cont^k$-smooth functions on $M$ with any prescribed $1$-jet at $p$. Thus, $F(M)$ is a $\cont^\infty$-smooth totally real submanifold of $\Cn$.

For the first part of the case $n=m$, we assume that $\cont^k(M)$ has $m$-rational density (with RD-basis $\{f_1,...,f_m\}$) and repeat the argument above to conclude that $F(M)$ is a $\cont^\infty$-smooth totally real submanifold of $\C^m$, where $F=(f_1,...,f_m)$. Moreover, $F(M)$ is rationally convex. To see this, recall that any continuous function on $F(M)$ can be uniformly approximated by $\cont^k$-functions on $F(M)$. But, $R(F(M))=\cont^k(F(M))$ (see the beginning of Section \ref{sec_prelim} for notation). Thus, $R(F(M))=\cont^0(F(M))$. This is a sufficient condition for rational convexity. So, $F(M)$ is a totally real and rationally convex $\cont^\infty$-submanifold of $\Cn$. By Theorem~3.1 in \cite{DuSi95}, $F(M)$ is Lagrangian with respect to some smooth K{\"a}hler form $\om=dd^c\phi$. An application of Lemma~\ref{lem_moser} shows that $M$ admits a Lagrangian embedding into $(\Cn,\om\std)$.   

Conversely, suppose $M$ admits a $\cont^\infty$-Lagrangian embedding with respect to $\om\std$. By \cite[Theorem 3.1]{DuSi95}, $F(M)$ is rationally convex. We can now repeat the first portion of our proof for the subcritical case to conclude that the components of $F$ form an RD-basis of $\cont^k(M)$. We need two ingredients for this: the existence of arbitrarily small rationally convex neighborhoods of $F(M)$, and an Oka-Weil-type result for rationally convex sets. The former is granted by Lemma ~\ref{lem_ratnbd}, and the latter is a standard result (see \cite[pg. 44]{St07}, for instance). The proof of Theorem ~\ref{thm_main} is now complete.
\begin{remark}
As any $\cont^\infty$-smooth embedding of $M$ can be $\cont^1$-approximated by a real-analytic one, we can apply Theorem~1 from \cite{WoLo09} (and an analogous theorem for rationally convex sets) to conclude that when $k>0$, any PD-basis (or RD-basis) of $\cont^k(M)$ can be perturbed to obtain a real-analytic basis. 
\end{remark}

\section{Proofs of the corollaries}\label{sec_cor}
\begin{proof}[Proof of Corollary ~\ref{cor_opt}.] $(1)$ As seen in the previous section, $(m-1)$-RD bases and $m$-PD bases of $\cont^\ell(M)$ ($0\leq \ell\leq k$) yield rationally and polynomially convex embeddings of $M$ into $\C^{m-1}$ and $\C^m$, respectively. This is topologically impossibe as no $m$-dimensional manifold can be rationally convex in $\C^{m-1}$ (see \cite[Corollary 2.3.10]{St07}) or polynomially convex in $\C^m$ (see \cite[Corollary 2.3.5]{St07}). 

$(2)$ Owing to Forstneri{\v c} and Rosay (\cite{FoRo93}), it is known that any $m$-dimensional manifold admits a totally real embedding into $\Cn$ if $n=\floor{\frac{3m}{2}}$. When $m\geq 2$, $m<\floor{\frac{3m}{2}}$, so by Part $1$ of Theorem ~\ref{thm_main}, the claim follows. When $m=1$, the only compact manifold without boundary is the circle, for which the result is standard.  

For the optimality of the bound, we consider the examples produced in \cite{HoJaLa12}. Let $M^{4t}:=\CP^2\times\cdots\times\CP^2$ be the product of $t$ copies of the complex projective plane. Then, invoking Theorem 2.1 from \cite{HoJaLa12}, we have that for any $\ell>0$,
	\beas
		&&\cont^\ell(M^{4t})\ \text{does not have $(6t-1)$-polynomial density};\\
		&&\cont^\ell(M^{4t}\times S^1)\ \text{does not have $6t$-polynomial density};\\
		&&\cont^\ell(M^{4t}\times\RP^2)\ \text{does not have $(6t+2)$-polynomial density};\\
		&&\cont^\ell(M^{4t}\times\RP^2\times S^1)\ \text{does not have $(6t+3)$-polynomial density}.
	\eeas
\end{proof}

For the next proof, we rely on the fact that compact rationally convex sets in $\Cn$ can be realized as polynomially convex subsets of $\C^{n+1}$. 

\begin{proof}[Proof of Corollary ~\ref{cor_basis}.] 
We first assume that $\{f_1,...,f_n\}$ is an RD-basis of $\cont^0(M)$. As noted previously, $F:(f_1,...,f_n)$ maps $M$ onto a rationally convex subset $M'$ of $\Cn$ such that $R(M')=\cont^0(M')$. By the proof of Theorem 1.2.11 in \cite{St07} (due to Rossi and Basener), there is a $\psi\in\cont^\infty(M')$ such that the closed subalgebra $[P(M'),\psi]$ generated by the polynomials and $\psi$ on $M'$ coincides with $R(M')$. Thus, if $\wt F:=(f_1,...,f_n,g):M\rightarrow\C^{n+1}$, where $g:=\psi\circ F$, $P(\wt F(M))=R(M')=\cont^0(M')=\cont^0(M)$. But this is precisely what it means for $\{f_1,...,f_n,g\}$ to be a PD-basis of $\cont^0(M)$. 

Now, suppose $\{f_1,...,f_n\}$ is an RD-basis of $\cont^\ell(M)$, where $\ell\geq 1$. Then, $M'=F(M)$ is a totally real and rationally convex submanifold of $\Cn$ (as argued in the proof of Theorem \ref{thm_main}). Again, we let $\psi:M'\rightarrow\C$ denote a $\cont^\infty$-smooth function such that the graph $\Gamma_\psi:=\{(x,\psi(x))\in\C^{n+1}:x\in M'\}$ is polynomially convex. We claim that $\Gamma_\psi$ is totally real. Let $\iota_n:M'\hookrightarrow \Cn$ and $\iota_{n+1}:\Gamma_g\hookrightarrow \C^{n+1}$ denote inclusions maps and $\Psi:\Cn\rightarrow\C^{n+1}$ be defined as $z\mapsto (z,\psi(z))$. Note that $\iota_{n+1}=\Psi\circ\iota_n\circ\pi$ on $\Gamma_\psi$, where $\pi:\Gamma_\psi\rightarrow M'$ is the projection map onto the first $n$ coordinates. Then,
	\beas	
		\iota_{n+1}^*(dw_1\wedge\cdots\wedge dw_n)&=&(\Psi\circ\iota_n\circ\pi)^*(dw_1\wedge\cdots\wedge dw_n)\\
														&=&\pi^*\iota_n^*(dz_1\wedge\cdots \wedge dz_n),
	\eeas 
which is nonzero everywhere on $\Gamma_\psi$ as $\iota_n^*(dz_1\wedge\cdots \wedge dz_n)$ is nonzero on $M'$ ($M'$ is totally real) and $(\pi_*)_p:T_{\Psi(p)}\Gamma_\psi\rightarrow T_pM'$ is a $\C$-linear isomorphism for every $p\in M'$. Thus, $\Gamma_\psi$ is a polynomially convex and totally real smooth subset of $\C^{n+1}$. In Section \ref{sec_pfthm2}, we saw that this suffices to conclude that $\{f_1,...,f_n,g=\psi\circ F\}$ is a PD-basis of $\cont^\ell(M)$. 
\end{proof}

\section{An \texorpdfstring{$\mathrm{h}$}{h}-Principle}\label{sec_hprin}
In our proof of Part $1$ of Theorem \ref{thm_main}, we rely on a powerful result from \cite{WoLo09} that establishes the genericity of polynomially convex embeddings of $M$ in $\Cn$ among totally real ones. As rational convexity is weaker than polynomial convexity, one can establish the equivalence of $n$-rational density and totally-real embeddability into $\Cn$ using purely topological methods without appealing to the constructive methods in \cite{WoLo09}, \cite{FoRo93} and \cite{Fo94}. For this, we establish a link between the totally-real embeddability and the isotropic embeddability of any $m$-dimensional manifold $M$ into $\Cn$. When $n=m$, the Gromov-Lees theorem (see \cite{AuLaPo94}) states that $M$ admits a Lagrangian immersion into $(\Cn,\om\std)$ precisely when its complexified tangent bundle is trivializable. This is the same topological condition that completely characterizes the totally real immersability of $M$ in $\Cn$ (see \cite[Prop. 9.1.4]{Fo11}). It is, therefore, not surprising that when $n>m$, the obstruction to the existence of totally real embeddings and to that of isotropic embeddings is one and the same. Although there is some indication of this in the literature, due to the lack of a clear reference, we provide a complete statement and proof of this fact.   

\begin{lemma}\label{lem_tr-iso} Let $M$ be a $\cont^k$-smooth $(k\geq 1)$ compact manifold of real dimension $m$, and $m<n$. Then, $M$ admits a $\cont^k$-smooth totally real embedding in $\Cn$ if and only if $M$ embeds in $(\Cn,\om\std)$ as a $\cont^\infty$-smooth isotropic submanifold via a $\cont^k$-embedding. 
\end{lemma}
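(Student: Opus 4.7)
The plan is to prove the two directions separately: the backward direction (isotropic implies totally real) is immediate, while the forward direction uses an $h$-principle together with a homotopy equivalence of Grassmannians.

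\emph{Isotropic implies totally real.} If $G:M\hookrightarrow(\Cn,\om\std)$ is an isotropic embedding, then $\om\std$ vanishes on each tangent space $G_*T_pM$. Since $\om\std(v,iv)=|v|^2$ for every $v\in\Cn$ (by direct computation from $\om\std=\sum dx_j\wedge dy_j$), no nonzero $v\in G_*T_pM$ can have $iv\in G_*T_pM$. Hence $G_*T_pM\cap iG_*T_pM=0$ and $G$ is totally real.

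\emph{Totally real implies isotropic.} After smoothing $M$ and the given totally real embedding $F:M\hookrightarrow\Cn$ to be $\cont^\infty$ (a $\cont^1$-small perturbation preserves total reality), one invokes Gromov's $h$-principle for subcritical isotropic embeddings (see, e.g., \cite{AuLaPo94}): for $m<n$, a $\cont^\infty$-smooth isotropic embedding $M\hookrightarrow(\Cn,\om\std)$ exists if and only if a \emph{formal} isotropic embedding does, i.e., a pair $(f,\Phi)$ with $f:M\to\Cn$ smooth and $\Phi:TM\to f^*T\Cn\cong M\times\Cn$ a fiberwise injective bundle map with fiberwise isotropic image. One constructs $(F,\Phi)$ by deforming $dF$ through bundle monomorphisms to an isotropic $\Phi$. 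The obstruction is fiberwise: let $\mathcal{TR}_m$ and $\mathcal{I}_m$ denote the spaces of $\rl$-linear totally real and isotropic monomorphisms $\rl^m\hookrightarrow\Cn$. Both fiber over the complex Grassmannian $\op{Gr}_m^\C(\Cn)$ via $\phi\mapsto\phi(\rl^m)\otimes_\rl\C$: over a complex $m$-plane $W$, the fiber of $\mathcal{TR}_m$ deformation retracts onto $GL(m,\C)/GL(m,\rl)$, while the fiber of $\mathcal{I}_m$ retracts onto the Lagrangian Grassmannian $U(m)/O(m)$ of the symplectic vector space $(W,\om\std|_W)$. The fiberwise inclusion is the standard homotopy equivalence $U(m)/O(m)\hookrightarrow GL(m,\C)/GL(m,\rl)$ furnished by polar decomposition, so $\mathcal{I}_m\hookrightarrow\mathcal{TR}_m$ is a weak homotopy equivalence. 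Standard obstruction theory then deforms the section $dF$ to the desired $\Phi$.

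The main obstacle is establishing the fiber homotopy equivalence $\mathcal{I}_m\simeq\mathcal{TR}_m$, which requires the observation that any complex subspace $W\subset(\Cn,\om\std)$ is symplectic (so that its Lagrangian Grassmannian is well-defined) together with the polar-decomposition identification of $U(m)/O(m)$ with $GL(m,\C)/GL(m,\rl)$. Once this and Gromov's $h$-principle are in place, applying the latter to $(F,\Phi)$ produces the required $\cont^\infty$-smooth isotropic submanifold of $(\Cn,\om\std)$ into which $M$ embeds via a $\cont^k$-embedding.
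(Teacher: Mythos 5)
Your backward direction is fine, and the broad shape of your forward direction (deform $dF$ through bundle monomorphisms to a fiberwise isotropic one, then apply an $h$-principle in the subcritical range) is the same plan the paper follows. However, the step you treat as a black box --- ``Gromov's $h$-principle for subcritical isotropic embeddings, valid for all $m<n$'' --- is exactly the delicate point. The standard references do not contain a clean statement in this generality: an $h$-principle for symplectic isotropic \emph{immersions} is only alluded to in \cite[Section~14.1]{ElMi02} without a statement, the subcritical isotropic \emph{embedding} $h$-principle stated in \cite[Section~12.4]{ElMi02} assumes $m<n-1$ and so misses the case $n=m+1$, and \cite{AuLaPo94} supplies the Gromov--Lees immersion theorem and a general position lemma, not the embedding $h$-principle you quote. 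The paper circumvents precisely this difficulty by contactifying: it works in $(\Cn\times\rl,\alpha\std)$, applies the $h$-principle for \emph{contact isotropic immersions} (\cite[Theorem~7.9]{CeEl12}), projects back to obtain a symplectic isotropic immersion of $M$ into $(\Cn,\om\std)$, and only then uses general position (\cite[Lemma~1.2.4]{AuLaPo94}, available since $m<n$) to perturb the immersion to an isotropic embedding. As written, your argument either rests on a statement that is not in the literature you cite or leaves $n=m+1$ unproved; you must either prove the embedding $h$-principle in that edge case or restructure the argument as ``immersion $h$-principle plus general position.''

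There is also an error in your fiberwise analysis. Under $\phi\mapsto\phi(\rl^m)+i\phi(\rl^m)$, the fiber of the space of totally real monomorphisms over a complex $m$-plane $W$ is the space of complex bases of $W$, i.e.\ a copy of $GL(m,\C)$; it does not deformation retract onto $GL(m,\C)/GL(m,\rl)$, which is the space of totally real $m$-\emph{planes} in $W$ and has a different homotopy type (you are conflating frames with planes). Likewise the isotropic fiber is the space of Lagrangian \emph{frames} of $(W,\om\std|_W)$, not $U(m)/O(m)$. The conclusion you want --- that the inclusion of isotropic monomorphisms into totally real monomorphisms is a fiberwise (weak) homotopy equivalence --- is nonetheless true, but the correct proof retracts complex frames onto unitary frames via Gram--Schmidt/polar decomposition (contractibility of the positive upper-triangular factor), noting that the real span of a unitary frame is isotropic. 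This is exactly how the paper manufactures its formal solution: it lifts $d\iota$ to the complex frame bundle, uses $V_{n,m}=U_{n,m}\times P$ with $P$ contractible to homotope into unitary frames, and so deforms $d\iota$ to a symplectic isotropic monomorphism, which it then lifts to a contact isotropic monomorphism before invoking the contact $h$-principle. With your fiber identification corrected along these lines and the $n=m+1$ issue addressed, your outline becomes essentially the paper's argument minus the contactification device.
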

\begin{proof} The main tool in this proof will be the $h$-principle for contact isotropic immersions. For this purpose, we first transfer the problem to the $\cont^\infty$-smooth category. Let $\tau:M\rightarrow\Cn$ be a $\cont^k$-smooth totally real embedding of $M$ into $\Cn$. Then, by standard approximation results, there exists a $\cont^k$-diffeomorphism $\phi:\Cn\rightarrow\Cn$ so that $\phi(\tau(M))$ is a $\cont^\infty$-smooth (embedded) submanifold of $\Cn$ and $\phi$ is $\cont^k$-close to the identity. Due to the latter property, $\phi$ can be chosen so that $\phi(\tau(M))$ continues to be a totally real submanifold of $\Cn$. This is because the condition of being totally real is an open one. We relabel $M$ to denote $\phi(\tau(M))$ and note that $M$ is now a $\cont^\infty$-smooth totally real submanifold of $(\Cn,\om\std)$. 

Now, consider the contact manifold $(\Cn\times\rl,\alpha\std)$, where
	\bes
		(\alpha\std)_{(z,t)}=dt-\sum_{ j=1}^ny_jdx_j,
	\ees
$z=(x_1+iy_1,...,x_n+iy_n)\in\Cn$ and $t\in\rl$. Let $\xi\std$ denote the corresponding contact structure --- i.e., the codimension one sub-bundle of the tangent bundle $T(\Cn\times\rl)$ that has fibre $(\xi\std)_p=\ker(\alpha\std)_p$ at each $p\in\Cn\times\rl$. We observe that if $\pi:\Cn\times\rl\rightarrow\Cn$ denotes the projection map, then 
	\be\label{eq_symcon}
		\pi^*(\om\std)=d\alpha\std.
	\ee
Also, for each $p\in\Cn\times\rl$, there is an $\rl$-isomorphism $T_{\pi(p)}\Cn\cong(\xi\std)_p$ given by 
\beas
	\lambda_p:e_{x_j}&\mapsto& (e_{x_j},y_j(p))\\
	\lambda_p:e_{y_j}&\mapsto& (e_{y_j},0),
\eeas
where $\{e_{x_1},e_{y_1},...,e_{x_n},e_{y_n}\}$ is the standard basis of $\rl^{2n}$, and $y_j(p)$ denotes the $y_j$-th coordinate of $p$. As $\lambda_p$ is smooth in $p$, it gives a (real) bundle morphism $\lambda:T\Cn\rightarrow \xi\std$ covering the inclusion map $j:\Cn\rightarrow\Cn\times\rl$ given by $z\mapsto (z,0)$. Moreover,
	\be\label{eq_consym}
		\lambda^*(d\alpha\std)=\om\std.
	\ee
We will use \eqref{eq_symcon} and \eqref{eq_consym} to move between $(\Cn,\om\std)$ and $(\Cn\times\rl,\alpha\std)$. 

Recall that an immersion $f:N\rightarrow\Cn\times\rl$ is called contact isotropic if $df:TN\rightarrow\xi\std$. Moreover, 
	\bes 	
		f^*(d\alpha\std)=d(f^*\alpha\std)=0.
	\ees
From \eqref{eq_symcon}, we have that
	\bes
		(\pi\circ f)^*(\om\std)=f^*(\pi^*(\om\std))=f^*(d\alpha\std)=0.
	\ees
Thus, $\pi\circ f:N\rightarrow\Cn$ is a symplectic isotropic immersion. In general, $\pi\circ f$ need not be an immersion if $f$ is merely an immersion, but here we have the additional fact that $(f_*)_p(T_pN)\cap \ker(\pi_*)_p=0$ for every $p\in N$, since $f_*(TN)\subset\ker(\alpha\std)$ and $\ker(\alpha\std)\cap\ker\pi_*=0$. To summarize, 
\begin{center}\refstepcounter{equation}(\theequation)\label{eq_proj}
{\em every contact isotropic immersed manifold in $(\Cn\times\rl,\alpha\std)$ projects to a symplectic isotropic immersed manifold in $(\Cn,\om\std)$}. 
\end{center}

Now, suppose we have a sequence of maps
	\bes
		N\xrightarrow{g} \Cn\xrightarrow{j} \Cn\times\rl,
	\ees
where $j$ is the inclusion map defined above. A contact isotropic monomorpism covering $j\circ g$ is a monomorphism $F:TN\rightarrow\xi\std$ covering $j\circ g$ such that $F^*(d\alpha\std)=0$. On the other hand, a symplectic isotropic monomorphism $G:TN\rightarrow T\Cn$ covering $g$ is one that satisfies $G^*(\om\std)=0$. Combining this with \eqref{eq_consym}, we have that
	\bes
		(\lambda\circ G)^*(d\alpha\std)=G^*(\lambda^*(d\alpha\std))=G^*(\om\std)=0.
	\ees
Hence, we have that 
\begin{center}\refstepcounter{equation}(\theequation)\label{eq_lift}
{\em every symplectic isotropic monomorphism $G:TN\rightarrow T\Cn$ covering $g$ lifts to a contact isotropic monomorphism $\lambda\circ G:TN\rightarrow\xi\std$ covering $j\circ g$}. 
\end{center}

Let us now state the relevant $h$-principle. Let $\op{Imm}_{\op{isotr}}(M;\Cn\times\rl,\alpha\std)$ be the space of contact isotropic immersions of $M$ into $(\Cn\times\rl,\alpha\std)$, and $\op{Mon}_{\op{isotr}}(TM;\xi\std)$ be the space of contact isotropic monomorphisms $TM\rightarrow\xi\std$. Note that any element $g\in\op{Imm}_{\op{isotr}}(M;\Cn\times\rl,\alpha\std)$ can be identified with $dg\in\op{Mon}_{\op{isotr}}(TM;\xi\std)$. Thus, there is an inclusion map 
	\be\label{eq:hprin}
		\op{Imm}_{\op{isotr}}(M;\Cn\times\rl,\alpha\std)
		\hookrightarrow 
		\op{Mon}_{\op{isotr}}(TM;\xi\std).
	\ee
The $h$-principle in this context states that the inclusion \eqref{eq:hprin} is, in fact, a homotopy equivalence (see \cite[Theorem ~7.9]{CeEl12}). So, $M$ admits a contact isotropic immersion into $\Cn\times\rl$ if $\op{Mon}_{\op{isotr}}(TM;\xi\std)$ is nonempty. Due to the observation \eqref{eq_proj}, the nonemptiness of $\op{Mon}_{\op{isotr}}(TM;\xi\std)$ would imply that $M$ admits a smooth symplectic isotropic immersion into $\Cn$. 

We now produce an element in $\op{Mon}_{\op{isotr}}(TM;\xi\std)$. Let $\iota:M\rightarrow\Cn$ denote the inclusion map. Then, $d\iota:TM\rightarrow T\Cn$ is a totally real monomorphism covering $\iota$. We complexify this map to obtain a complex-linear monomorphism $d_\C\iota:\C TM\rightarrow T\Cn$ in the following way:
	\bes
		d_{\C}\iota:(p,v\otimes (a+ib))\mapsto\big(p,a\:d\iota_p(v)+ib\:d\iota_p(v)\big).
	\ees
Now, for any complex $m$-frame $f_p:\C^{m}\rightarrow\C T_pM$, we obtain an injective $\C$-linear map $d_\C\iota_p\circ f_p:\C^m\rightarrow T_{\iota(p)}\Cn=\Cn$. Thus, $d\iota$ lifts to a map on the frame bundle associated to $\C TM$:
	\bes
		\iota_{_{\mathcal{F}}}:\mathcal{F}_\C M\rightarrow \Cn\times V_{n,m},
	\ees
where $V_{n,m}$ is the Stiefel manifold of all complex $m$-frames in $\Cn$. By the Gram-Schmidt orthogonalization process, $V_{n,m}$ is the product of $U_{n,m}$ --- the space of unitary $m$-frames in $\Cn$, and $P$ --- the space of upper-triangular matrices with positive eigenvalues. As $P$ is contractible, there is a smooth homotopy $H:\mathcal{F}_\C M\times[0,1]\rightarrow \Cn\times V_{n,m}$ such that $H(\cdot,0)=\iota_{_{\mathcal{F}}}(\cdot)$, $H(\cdot,1)$ maps $\mathcal{F}_\C M$ into $\Cn\times U_{n,m}$ and $H(\cdot,t)$ covers $\iota$ for all $t\in[0,1]$. This homotopy descends to $TM$ to give monomorphisms $F^s:TM\rightarrow T\Cn$ covering $\iota$, such that $F^0=d\iota$. Moreover, since the real linear span of a unitary $m$-frame in $\Cn$ is an $m$-dimensional isotropic subspace (with respect to $\om\std$), $F^1$ is a symplectic isotropic monomorphism. By \eqref{eq_lift}, $\lambda\circ F^1\in\op{Mon}_{\op{isotr}}(TM;\xi\std)$. Thus, there is a smooth immersion $f:M\rightarrow\Cn$ so that $f(M)$ is symplectic isotropic in $\Cn$. Since $\dim M<n$, a general position lemma allows us to approximate $f$ by an isotropic embedding $g$ (see \cite[Lemma~1.2.4]{AuLaPo94}). Recall that $M$ is actually $\phi(\tau(M))$, where $\phi$ and $\tau$ are both $\cont^k$-smooth. The embedding we seek is $g\circ\phi\circ\tau$, which is clearly $\cont^k$-smooth. We have proved one direction of our claim. 

Any isotropic linear subspace of $\Cn$ is necessarily a totally real one. So, the converse statement poses no challenge, and the proof of Lemma~\ref{lem_tr-iso} is complete.
\end{proof}

\begin{remark} It is likely that the contactification procedure can be avoided in the above proof. One approach would be to directly invoke an $h$-principle for symplectic isotropic immersions. Such an $h$-principle is alluded to in \cite[Section~14.1]{ElMi02}, but an actual statement is missing. Alternatively, we could invoke the $h$-principle for subcritical embeddings stated in \cite[Section~12.4]{ElMi02}, but it is missing the case $n=m+1$ (the required hypothesis is $m<\floor{\frac{\dim_\rl\Cn-1}{2}}=n-1$). We attribute this to an oversight, as the proof given therein seems to work also for $m\leq\floor{\frac{\dim_\rl\Cn-1}{2}}$ in the symplectic scenario.
\end{remark}

\section{Examples and Open Questions}\label{sec_ex}	
We first show that in contrast to Theorem \ref{thm_main}, $n$-polynomial density of $\cont^0(M)$ does not have a characterization in terms of totally real embeddings (or immersions) of $M$ into $\Cn$.

\begin{Prop}\label{prop_ex}
Let $M:=S_{_{-3}}\times S_{_{-3}}$, where $S_{_{-3}}:=(\RP^2)^{\# 5}$ is the connected sum of $5$ projective planes. Then, $\cont^0(M)$ has $5$-polynomial density, but $M$ does not admit a totally real immersion into $\C^5$.
\end{Prop}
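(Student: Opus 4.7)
The claim has two assertions that I prove independently.

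For the $5$-polynomial density of $\cont^0(M)$, the plan is to first construct a $4$-RD-basis from the two factors and then apply Corollary~\ref{cor_basis}. The result cited from \cite{ShSu15} in the introduction provides a $2$-RD-basis $\{f_1, f_2\} \subset \cont^\infty(S_{-3})$ of $\cont^0(S_{-3})$, since $S_{-3}$ is a compact surface distinct from $S^2$ and $\RP^2$. The image $X := (f_1, f_2)(S_{-3}) \subset \C^2$ is then rationally convex and satisfies $R(X) = \cont^0(X)$. I will show that the four smooth pullbacks $(f_1 \circ \pi_1,\,f_2 \circ \pi_1,\,f_1 \circ \pi_2,\,f_2 \circ \pi_2) : M \to \C^4$ form a $4$-RD-basis of $\cont^0(M)$. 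The content is the following product lemma: if $X \subset \C^a$ and $Y \subset \C^b$ are rationally convex compacta with $R(X) = \cont^0(X)$ and $R(Y) = \cont^0(Y)$, then $X \times Y \subset \C^{a+b}$ is rationally convex and satisfies $R(X \times Y) = \cont^0(X \times Y)$. Rational convexity of the product is immediate (lift a separating polynomial from either factor), and the density identity follows by applying the Stone-Weierstrass theorem to the self-adjoint subalgebra $\cont^0(X) \otimes \cont^0(Y) \subset R(X \times Y)$, which clearly separates points of $X \times Y$. Corollary~\ref{cor_basis} with $\ell = 0$ and $n = 4$ then appends a single smooth function to produce the required $5$-PD-basis.

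For the nonexistence of a totally real immersion $f : M \to \C^5$, the plan is the standard integral Chern-class obstruction. Such an $f$ would induce, via the $\C$-linear extension $v + iw \mapsto df(v) + J \cdot df(w)$, a complex bundle monomorphism $\C TM \hookrightarrow \epsilon^5_\C$; taking a Hermitian orthogonal complement yields a splitting $\C TM \oplus L \cong \epsilon^5_\C$ for some complex line bundle $L$ on $M$. Expanding $c(\C TM)(1 + c_1(L)) = 1$ in $H^*(M;\mathbb{Z})$ and equating terms in degrees $2$ and $4$ forces $c_1(L) = -c_1(\C TM)$ together with the compatibility condition $c_2(\C TM) = c_1(\C TM)^2$ in $H^4(M;\mathbb{Z})$, which I refute directly.

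Turning to the Chern-class computation: $H^2(S_{-3}; \mathbb{Z}) \cong \mathbb{Z}/2$ has a unique nonzero class $g$, and modulo $2$ one has $c_1(\C TS_{-3}) \equiv w_2(TS_{-3} \oplus TS_{-3}) = w_1(TS_{-3})^2$. By Wu's formula on a closed surface $w_1(TS_{-3})^2 = w_2(TS_{-3})$, which is nonzero because $\chi(S_{-3}) = -3$ is odd; hence $c_1(\C TS_{-3}) = g$, while $c_2(\C TS_{-3}) = 0$ for dimensional reasons. Writing $e_j := \pi_j^* g \in H^2(M;\mathbb{Z})$ and invoking Whitney for $\C TM = \pi_1^* \C TS_{-3} \oplus \pi_2^* \C TS_{-3}$ gives $c(\C TM) = (1 + e_1)(1 + e_2) = 1 + (e_1 + e_2) + e_1 e_2$. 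Since each $e_j$ is $2$-torsion and $e_j^2 \in \pi_j^* H^4(S_{-3}) = 0$, one obtains $c_1(\C TM)^2 = (e_1 + e_2)^2 = 0$. On the other hand, by K\"unneth $H^4(M;\mathbb{Z}) \cong \mathbb{Z}/2$ is generated by $e_1 e_2$, and its mod-$2$ pairing with $[M]_{\mathbb{Z}/2}$ is $\langle g, [S_{-3}]_{\mathbb{Z}/2}\rangle^2 = 1$, so $e_1 e_2 \ne 0$. Therefore $c_2(\C TM) = e_1 e_2 \ne 0 = c_1(\C TM)^2$, and no such $L$ exists.

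The main delicate step is this Chern-class bookkeeping in the non-orientable setting: the contrast between $c_1^2 = 0$ (forced by $2$-torsion and the triviality of $H^4(S_{-3})$) and $c_2 \ne 0$ (driven by the Wu relation and the oddness of $\chi(S_{-3})$). Once that asymmetry is in hand the obstruction is immediate; part (1) is essentially formal given the product lemma and Corollary~\ref{cor_basis}.
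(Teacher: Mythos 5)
Your proposal is correct, and the Chern--class half is essentially the paper's own argument: the same splitting $\C\otimes TM\oplus L\cong\epsilon^5_{\C}$, the same identification $c_1(\C\otimes TS_{-3})=g$ via $w_1^2=w_2\neq 0$ (the paper states $b=w_2\bmod 2$, which by Wu is the same as your $w_1^2$), and the same degree-$4$ contradiction $e_1e_2\neq 0=(e_1+e_2)^2$. The density half, however, takes a genuinely different route. The paper constructs an explicit model: it embeds $S_{-3}$ into $\C^2$ as a Lagrangian surface with one open Whitney umbrella (Nemirovski--Siegel), cites \cite{ShSu15} for the rational and holomorphic convexity of that image $S$ and for $R(S)=\cont^0(S)$, and then treats $X=S\times S\subset\C^4$ by the O'Farrell--Preskenis--Walsh theorem, reducing uniform approximation on $X$ to approximation on the non--totally-real locus $X_0=(S\times\{0\})\cup(\{0\}\times S)$, where it is done by hand with sums $R^1_n(x)+R^2_n(y)-f(0,0)$. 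You instead start from the $2$-rational density of $\cont^0(S_{-3})$ (the same \cite{ShSu15} input, quoted abstractly), and prove a general product lemma: if $R(X)=\cont^0(X)$ and $R(Y)=\cont^0(Y)$ then $R(X\times Y)=\cont^0(X\times Y)$, by noting that $R(X\times Y)$ is a closed self-adjoint-containing algebra containing all products $f(x)g(y)$ and applying Stone--Weierstrass. This is more elementary (no holomorphic convexity, no OPW, no explicit umbrella geometry) and more general, while the paper's route exhibits a concrete rationally and holomorphically convex product inclusion of $M$ into $\C^4$, which is of some independent interest. Two small points you leave implicit, both also implicit in the paper's use of Corollary \ref{cor_basis}: the map given by a $2$-RD-basis is injective (point separation), so $M\cong X\times X$ and density on $X\times X$ pulls back to $\cont^0(M)$; and in your pairing check $\langle e_1e_2,[M]_{\mathbb{Z}/2}\rangle$ you are tacitly reducing the integral class mod $2$, which is fine since $g$ reduces to the nonzero mod-$2$ class. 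Neither affects correctness.
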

\begin{proof} Note that by Corollary \ref{cor_basis}, it is enough to show that $\cont^0(M)$ has $4$-rational density. For this, we first use a result due to Nemirovski and Siegel (see \cite{NeSi16}) which says that a nonorientable surface with $\chi\leq -1$ admits a Lagrangian embedding in $\CC$ with $k$ open Whitney umbrellas if and only if 	
	\bes
		k\in\{4-3\chi, -3\chi,-3\chi-4,...,\chi+4-4\floor{\chi/4+1}\}.
	\ees
Since $\chi(S_{_{-3}})=-3$, we can embed $S_{_{-3}}$ into $\CC$ as a Lagrangian surface with one open Whitney umbrella (say at the origin). Let us denote the embedded $S_{_{-3}}$ by $S$. In \cite{ShSu15}, it is shown that $S$ is holomorphically and rationally convex, and $R(S)=\cont^0(S)$. It follows that $X=S\times S$ is a topological embedding of $M$ in $\C^4$ that is holomorphically and rationally convex, and totally real away from $X_0=\big(S\times\{0\}\big)\cup \big(\{0\}\times S\big)$. The rational convexity of $X$ implies that $R(X)=\overline{\hol(X)}$, where $\overline{\hol(X)}$ denotes the closed subalgebra of $\cont^0(X)$ that consists of uniform limits on $X$ of elements in $\hol(X)$. It, therefore, suffices to show that $\cont^0(X)=\overline{\hol(X)}$. For this, choose an arbitrary $f\in\cont^0(X)$. We claim that $X$, $X_0$ and $f$ satisfy the conditions of the following result by O'Farrel, Preskenis and Walsch \cite{OFPrWa84}:
\smallskip

\noindent {\em  Let $X\subset\Cn$ be a compact holomorphically convex set, and let $X_0$ be a closed subset of $X$ such that $X\setminus X_0$ is a totally real subset of $\Cn\setminus X_0$. A function $f\in\cont^0(X)$ can be approximated uniformly on $X$ by functions in $\hol(X)$ if and only if $f|_{X_0}$ can be approximated uniformly on $X_0$ by functions in $\hol(X)$.}
\smallskip

To prove our claim, we consider $f|_{X_0}$. Let $g_1(x):=f(x,0)$ and $g_2(x):=f(0,x)$ for $x\in S$. Note that $g_1(0)=g_2(0)=f(0,0)$. Since $g_j\in\cont^0(S)$, there are rational functions $\{R^j_n\}$, $n\in\pnat$, on $\CC$ with no poles on $S$ such that $R^j_n$ converge uniformly to $g_j$ on $S$, $j=1,2$. Now, let
	\bes
		R_n(x,y):=R^1_n(x)+R^2_n(y)-f(0,0)
	\ees
for $(x,y)\in\C^4$ and $n\in\pnat$. Then, $R_n\in \hol(X)$ and, taking uniform limits on $X_0$,
	\beas
		\lim_{n\rightarrow\infty}R_n(x,y)&=&
		\lim_{n\rightarrow\infty}(R^1_n(x)+R^2_n(y)-f(0,0))\\
		&=&
			\begin{cases}
				g_1(x)+g_2(0)-f(0,0),\ \text{when}\ (x,y)\in S\times\{0\};\\
				g_1(0)+g_2(y)-f(0,0),\ \text{when}\ (x,y)\in \{0\}\times S
			\end{cases}
\\
		&=&
			\begin{cases}
				g_1(x),\ \text{when}\ (x,y)\in S\times\{0\};\\
				g_2(y),\ \text{when}\ (x,y)\in \{0\}\times S;\\
			\end{cases}
\\
		&=& f(x,y).
	\eeas
So, $f|_{X_0}$ can be uniformly approximated by elements in $\hol(X)$. By the O'Farrel-Preskenis-Walsh result, $f\in\overline{\hol(X)}$. Since $f\in\cont^0(X)$ was arbitrary, $\cont^0(X)=\overline{\hol(X)}$. Thus, $\cont^0(M)$ has $4$-rational density and $5$-polynomial density.

Now, suppose $M$ admits a totally real immersion into $\C^5$. Then, there is a complex line bundle $Q$ such that $(\C\otimes TM)\oplus Q$ is trivial --- i.e.,
	\bes
		c(\C\otimes TM)\smile c(Q)=1,
	\ees
where $c(B)$ denotes the total Chern class of the vector bundle $B$ and $\smile$ denotes the cup product of cohomology classes. Let $b$ denote the first Chern class of $\C\otimes TS_{_{-3}}$, and $b_1$ and $b_2$ denote the pull-backs of $b$ to $M$ under the corresponding projections to $S_{_{-3}}$. Note that $b$ is nonzero, as $b=w_2 \pmod 2$, where $w_2$ is the second Stiefel-Whitney class of $S_{_{-3}}$ and is the nonzero generator of $H^2(S_{_{-3}};\mathbb{Z}_2)$. Then, $Q$ must satisfy
	\bes
		(1+b_1)(1+b_2)c(Q)=1.
	\ees
But, this means $Q$ is of rank at least $2$, which contradicts the assummption on $Q$. Hence, $M$ does not admit a totally real immersion into $\C^5$. 
\end{proof}

We now paraphrase Corollary ~\ref{cor_opt} for low-dimensional manifolds, and pose some open questions. Note that when $m=1$, the only manifold in question is the circle, any embedding of which is isotropic in $\C$. So, for any $k\geq 0$, the space of $\cont^k$-smooth functions on the circle has $1$-rational density and $2$-polynomial density, and this is optimal.

\medskip

\noindent{\bf Surfaces.} The case of compact surfaces is explored in \cite{ShSu15}, where it is shown that the space of continuous functions on any surface other than $S^2$ and $\RP^2$ has $2$-rational density. For topological reasons no compact surface can have $2$-polynomial density.
Corollary ~\ref{cor_opt} says that the situation for the two exceptions is not too bad since the space of continuous (or $\cont^k$-smooth) functions on any surface has $3$-polynomial density (and this is sharp). We point the reader towards Izzo and Stout's paper \cite[Section~13]{IzSt15} for a possible construction of a PD-basis. The torus $\mathbb T^2$ and all nonorientable compact surfaces with negative Euler characteristic that is a multiple of $4$ admit a smooth Lagrangian embedding into $(\C^2,\om\std)$ (see~\cite{Giv}), and therefore, their $\cont^k$-spaces have $2$-rational density. All other surfaces do not admit a smooth Lagrangian embedding into $\CC$ (see \cite{Au90} or \cite{NeSi16}, the difficult case of the Klein bottle was resolved by Shevchishin~\cite{She}). So, any RD-basis for their $\cont^k$-spaces must contain three elements. These arguments cannot be used for $k=0$, and this raises the following 

\begin{ques}
Is there a rationally convex topological $2$-sphere in $\CC$?
\end{ques}  

\noindent Clearly, such an embedding, if exists, cannot be totally real or have isolated elliptic complex points, and therefore must have either singularities or nongeneric complex points.

\medskip

\noindent{\bf Three-folds.} For $3$-manifolds $M$, Corollary ~\ref{cor_opt} guarantees $4$-polynomial (and rational) density of $\cont^k(M)$, $k\geq 0$. This is optimal for rational density when $k>0$, since $M=\RP^2\times S^1$ does not admit a totally real immersion into $\C^3$ (see \cite{HoJaLa12}). Therefore, $\cont^k(M)$, $k>0$, has $4$-rational density, but not $3$-rational density. For a U-parallelizable (and orientable) example, consider $M=S^3$. If $\cont^k(S^3)$, $k\geq 1$, has an RD-basis of length $3$, then by Theorem ~\ref{thm_main}, $S^3$ admits a $\cont^\infty$-embedding in $\Cn$ that is Lagrangian with respect $\om\std$. Due to a result by Gromov \cite{Gr85}, the embedded sphere supports the boundary of a nonconstant holomorphic disc. By Theorem~2.5 in \cite{DuSi95}, this is not possible since $H^1(S^3,\mathbb{Z})=0$. Thus, $\cont^k(S^3)$ cannot have $3$-rational density when $k>0$. Note that our techniques do not exclude the possibility that $\cont^0(M)$ has $3$-rational density for all three manifolds $M$. One approach to this problem would be to seek Givental-type results (see~\cite{Giv}) on Lagrangian inclusions of three-manifolds into $\C^3$.  

\medskip

\noindent{\bf Four-folds.} By Corollary ~\ref{cor_opt}, $\cont^k(M)$ always admits $6$-polynomial density for any $4$-manifold $M$. As $\C\mathbb{P}^2$ does not admit a totally real embedding into $\C^5$, $\cont^k(\C\mathbb{P}^2)$ ($k>0$) cannot have $5$-polynomial density. It follows from Theorem \ref{thm_main} and \cite[ Lemma~4.1]{HoJaLa12} that $\cont^k(M)$, $k>0$, has $5$-rational (and polynomial) density precisely when the first dual Pontryagin class of $M$ vanishes. To see that $5$ is the optimal length of an RD-basis, consider $M=S^4$. $S^4$ has vanishing first dual Pontryagin class (for orientable manifolds this condition is equivalent to U-parallelizability) but, since its Euler characteristic does not vanish, it does not admit a totally real embedding into $\C^4$ (see \cite{We69}). In particular, $\cont^k(S^4)$ ($k>0$) does not have $4$-rational density.  

\medskip

In view of the difference between the $\cont^0(M)$ and $\cont^k(M)$, when $k>0$, we end this discussion with the following question.

\begin{ques}What are the optimal integers $n$ and $n'$ (in terms of $m$) so that every $\cont^0(M$) has $n$-rational density and $n'$-polynomial density?
\end{ques}

\bibliography{rationaldensity}

\begin{thebibliography}{10}

\bibitem{Au90}
Mich{\`e}le Audin.
\newblock Quelques remarques sur les surfaces lagrangiennes de {G}ivental.
\newblock {\em J. Geom. Phys.}, 7(4):583--598, 1990.

\bibitem{AuLaPo94}
Mich{\`e}le Audin, Fran{\c{c}}ois Lalonde, and Leonid Polterovich.
\newblock Symplectic rigidity: Lagrangian submanifolds.
\newblock In {\em Holomorphic curves in symplectic geometry}, pages 271--321.
  Springer, 1994.

\bibitem{CeEl12}
Kai Cieliebak and Yakov Eliashberg.
\newblock {\em From Stein to Weinstein and back: symplectic geometry of affine
  complex manifolds}, volume~59.
\newblock American Mathematical Society, 2012.

\bibitem{DuSi95}
Julien Duval and Nessim Sibony.
\newblock Polynomial convexity, rational convexity, and currents.
\newblock {\em Duke Math. J.}, 79(2):487--513, 1995.

\bibitem{ElMi02}
Yakov Eliashberg and Nikolai Mishachev.
\newblock {\em Introduction to the h-principle}.
\newblock American Mathematical Society Providence, 2002.

\bibitem{Fo86}
Franc Forstneri{\v c}.
\newblock On totally real embeddings into $\mathbb{C}^n$.
\newblock {\em Expo. Math.}, 4:243--255, 1986.

\bibitem{Fo94}
Franc Forstneri{\v c}.
\newblock Approximation by automorphisms on smooth submanifolds of
  $\mathbb{C}^n$.
\newblock {\em Mathematische Annalen}, 300(1):719--738, 1994.

\bibitem{Fo11}
Franc Forstneri{\v c}.
\newblock {\em Stein manifolds and holomorphic mappings}.
\newblock Springer, 2011.

\bibitem{FoRo93}
Franc Forstneri{\v{c}} and Jean-Pierre Rosay.
\newblock Approximation of biholomorphic mappings by automorphisms of
  $\mathbb{C}^n$.
\newblock {\em Inventiones mathematicae}, 112(1):323--349, 1993.

\bibitem{Giv}
Alexander Givental.
\newblock Lagrangian imbeddings of surfaces and the open {W}hitney umbrella.
\newblock {\em Funktional. Anal. i Prizholen}, 96(3):35--41, 1986.

\bibitem{Gr85}
Mikhael Gromov.
\newblock Pseudo-holomorphic curves in symplectic manifolds.
\newblock {\em Invent. Math.}, 82(2):307--347, 1985.

\bibitem{HoJaLa12}
Pak~Tung Ho, Howard Jacobowitz, and Peter Landweber.
\newblock Optimality for totally real immersions and independent mappings of
  manifolds into $\mathbb{C}^n$.
\newblock {\em New York J. Math.}, 18:463--477, 2012.

\bibitem{IzSt15}
Alexander Izzo and Edgar~Lee Stout.
\newblock Hulls of surfaces.
\newblock {\em arXiv preprint arXiv:1505.03939}, 2015.

\bibitem{WoLo09}
Erik L{\o}w and Erlend~Forn{\ae}ss Wold.
\newblock Polynomial convexity and totally real manifolds.
\newblock {\em Complex Variables and Elliptic Equations}, 54(3-4):265--281,
  2009.

\bibitem{Ne08}
Stefan Nemirovski.
\newblock Finite unions of balls in $\mathbb{C}^n$ are rationally convex.
\newblock {\em Russian Math. Surveys}, 63(2):381--382, 2008.

\bibitem{NeSi16}
Stefan Nemirovski and Kyler Siegel.
\newblock Rationally convex domains and singular {L}agrangian surfaces in
  $\mathbb{C}^2$.
\newblock {\em Invent. Math.}, 203(1):333--358, 2016.

\bibitem{OFPrWa84}
Anthony~G. O'Farrell, John~K. Preskenis, and David Walsh.
\newblock Holomorphic approximation in {L}ipschitz norms.
\newblock {\em Proceedings of the conference on Banach algebras and several
  complex variables (New Haven, Conn., 1983) Contemp. Math.}, 32:187--194,
  1984.

\bibitem{RaSi74}
R.~Michael Range and Yum-Tong Siu.
\newblock $\cont^k$-approximation by holomorphic functions and closed forms on
  $\cont^k$-submanifolds of a complex manifold.
\newblock {\em Math. Ann.}, 210(2):105--122, 1974.

\bibitem{ShSu15}
Rasul Shafikov and Alexandre Sukhov.
\newblock Approximation on real surfaces and rational convexity of {L}agrangian
  inclusions.
\newblock {\em arXiv preprint arXiv:1504.02083}, 2015.

\bibitem{She}
V.~V. Shevchishin.
\newblock Lagrangian embeddings of the klein bottle and the combinatorial
  properties of mapping class groups.
\newblock {\em Izv. Ross. Akad. Nauk Ser. Mat.}, 73(4):153--224, 2009.

\bibitem{St07}
Edgar~Lee Stout.
\newblock {\em Polynomial convexity}, volume 261.
\newblock Springer Science \& Business Media, 2007.

\bibitem{We69}
Raymond~O'Neil Wells~Jr.
\newblock Compact real submanifolds of a complex manifold with nondegenerate
  holomorphic tangent bundles.
\newblock {\em Math. Ann.}, 179(2):123--129, 1969.

\end{thebibliography}
\bibliographystyle{plain}
\end{document}